\theoremstyle{definition}
\def\be{\begin{eqnarray}}
\def\ee{\end{eqnarray}}
\tikzstyle{vertex}=[circle, draw, inner sep=0pt, minimum size=6pt]
\def\matQ{{\mathbb{Q}}}
\def\matC{{\mathbb{C}}}
\def\matN{{\mathbb{N}}}
\newcommand{\bA}{\mathsf{A}}
\newcommand{\bT}{\mathsf{T}}
\newcommand{\bK}{\mathsf{K}}
\newcommand{\Lie}{\mathrm{Lie}}
\let\bs\boldsymbol
\def\zz{{\bs z}}
\def\aa{{\bs a}}
\def\ss{{\bs s}}
\theoremstyle{definition}
\newtheorem{Definition}{Definition}
\newtheorem{Proposition}{Proposition}
\newtheorem{Lemma}{Lemma}
\newtheorem{Theorem}{Theorem}
\newtheorem{Note}{Note}
\newcommand{\somespecialrotate}[3][]{%
\begingroup
\sbox\@tempboxa{#3}%
\@tempdima=.5\wd\@tempboxa
\sbox\@tempboxa{\rotatebox[#1]{#2}{\usebox\@tempboxa}}%
\advance\@tempdima by -.5\wd\@tempboxa
\mbox{\hskip\@tempdima\usebox\@tempboxa}%
\endgroup}
\def\qm {{{\textnormal{\textsf{QM}}}}}
\def \vss {\widehat{{\O}}_{{\rm{vir}}}}
\def\zz{{\bs z}}
\def\dd{{\bs d}}
\def\xx{{\bs x}}
\def\O {{\mathcal{O}}}
\begin{document}
\title{Symplectic Duality for $T^*Gr(k,n)$}
\author{Hunter Dinkins}
\date{}
\maketitle
\thispagestyle{empty}

\begin{abstract}
In this paper, we explore a consequence of symplectic duality (also known as 3d mirror symmetry) in the setting of enumerative geometry. The theory of quasimaps allows one to associate hypergeometric functions called vertex functions to quiver varieties. In this paper, we prove a formula which relates the vertex functions of $T^*Gr(k,n)$ and its symplectic dual. In the course of the proof, we study a family of $q$-difference operators which act diagonally on Macdonald polynomials. Our results may be interpreted from a combinatorial perspective as providing an evaluation formula for a $q$-Selberg type integral. 
\end{abstract}

\section{Introduction}
\subsection{}
The concept of symplectic duality, sometimes referred to as 3D mirror symmetry, originated in physics and has been attracting increasing attention from mathematicians in recent years, see \cite{AOElliptic}, \cite{dinksmir3}, \cite{dinksmir}, \cite{MirSym2}, \cite{MirSym1}. For a variety $X$ with certain conditions, it is expected that there exists a symplectic dual variety, which we denote by $X^{!}$, so that many deep geometric properties of $X$ and $X^{!}$ are related. From a physical point of view, symplectic duality exchanges the Higgs and Coulomb branches of certain three dimensional gauge theories. From a mathematical perspective, it has led to deep results and expectations in topics of enumerative geometry, stable envelopes, and quantum difference equations, all of which are important topics in geometric representation theory. In some cases, such as bow varieties \cite{NakBow}, the construction of the symplectic dual variety is known. Once one has a candidate for the symplectic dual of a variety $X$, it is generally a nontrivial problem to verify that the expected mathematical properties hold.

In this paper, we consider the variety $X=T^*Gr(k,n)$ and its proposed dual variety $X^{!}$, both of which can be constructed as Nakajima quiver varieties when $2k\leq n$. In \cite{MirSym1}, the authors prove that that expected relationship between the elliptic stable envelopes of $X$ and $X^{!}$ hold. Here we take a different approach and explore symplectic duality from the perspective of enumerative geometry. The main result of this paper is that, after appropriate normalizations and identification of various parameters, the enumerative invariants of $X$ and $X^{!}$ known as \textit{vertex functions} coincide.

The basic ideas of vertex functions and the enumerative geometry of Nakajima quiver varieties are as follows. For a geometric invariant theory quotient $X$ with certain conditions which are satisfied by Nakajima quiver varieties, see \cite{qm} and \cite{pcmilect}, one can define the moduli space of quasimaps from $\mathbb{P}^1$ to $X$ of degree $\dd$. This space compactifies the space of maps from $\mathbb{P}^1$ to $X$ by allowing the maps to have singularities at finitely many points. The maximal torus $\bT \subset Aut(X)$ and an additional torus $\mathbb{C}^{\times}_q$ both act on the space of quasimaps of a given degree.

On the moduli space of quasimaps of degree $\dd$, there exists a certain natural $K$-theory class $\vss^{\dd}$ called the symmetrized virtual structure sheaf. For quasimaps nonsingular at $\infty \in \mathbb{P}^1$, one can use equivariant localization with respect to $\bT\times \mathbb{C}^{\times}_q$ to pushforward $\vss^{\dd}$ by the evaluation map at $\infty$ to obtain a $K$-theory class on $X$. The generating function
$$
\textbf{V}(\zz) = \sum_{\dd} \text{ev}_{\infty,*}(\vss^{\dd}) \zz^{\dd} \in K_{\bT\times \mathbb{C}^{\times}_q}(X)[[\zz]]
$$
is known as the vertex function of $X$. Here, the parameter $\zz$ is introduced formally to keep track of the degrees of the quasimaps and the sum is taken over a certain cone inside of $\mathbb{Z}^{|I|}$ where $I$ is the vertex set of the quiver, outside of which the quasimap moduli spaces are empty. Restricting to a torus fixed point $p \in X^{\bT}$ gives a power series 
$$
\textbf{V}_{p}(\zz) \in K_{\bT\times \mathbb{C}^{\times}_q}(p)[[\zz]]
$$
Since the pushfoward is defined by equivariant localization, the function $\textbf{V}_p(\zz)$ is a power series in $\zz$, with coefficients in $\mathbb{C}(\aa,q)$, where $\aa$ denotes the equivariant parameters of the torus $\bT$. 

It is known that the vertex functions $\textbf{V}_p(\zz)$, $p \in X^{\bT}$ give a basis of solutions to a certain system of $q$-difference equations in $\zz$, see \cite{OS} and \cite{AOElliptic}. For a pair of symplectic dual varieties $X$ and $X^{!}$, it is expected that the systems of $q$-differences equations are equivalent, after a certain identification of the parameters $\zz$ with the equivariant parameters of $X^{!}$ and vice versa. As a result, the vertex functions for $X$ and $X^{!}$ can be thought of as giving two bases of solutions to the same $q$-difference equation. It is also expected that, after appropriate normalization, the so-called elliptic stables envelope provides the transition matrix between these two bases of solutions. As the elliptic stable envelope gives a triangular matrix in the basis of fixed points, it is expected that the vertex functions for $X$ and $X^{!}$ corresponding to the ``last" fixed points in some ordering coincide after identification of parameters and normalization by some factor. 

Since the identification of the parameters involves interchanging the degree counting parameters $\zz$ with the equivariant parameters on the dual side and vice versa, this equality is equivalent to a nontrivial combinatorics involving re-expanding a power series in a different set of variables and collecting certain terms into rational functions.

As the main result of this paper, we prove that the vertex functions at the ``last" fixed point for the variety $X=T^*Gr(k,n)$ and its proposed dual $X^{!}$ are equal, in the case of $2k\leq n$. The main ideas of the proof are as follows. 

From the integral representation of the vertex function in Section 4, it follows that the vertex function for a Nakajima quiver variety can often be thought of as a \textit{descendant insertion} into the vertex function of simpler quiver variety. In this case, the simpler quiver variety is just a point, the vertex function of which was explored in detail in \cite{dinksmir2} and \cite{dinksmir3}. As observed in \cite{dinksmir3}, descendant insertions into vertex functions can in some cases be described through the action of certain $q$-difference operators. For $X$ and $X^{!}$ in this paper, we are led to introduce a family of $q$-difference operators commuting with the Macdonald difference operators, closely related to those studied in \cite{NoumiSano} and \cite{NSmac}. Once the fundamental property of the difference operators is known, the remainder of the proof can be completed by direct computation.

The structure of this paper is as follows. In Section 2, we introduce the descriptions of $X=T^*Gr(k,n)$ and its dual $X^{!}$ as Nakajima quiver varieties. We will also define another variety $X_{\lambda}$, whose quiver data and vertex function are closely related to that of $X^{!}$. Then we describe the torus fixed points of $X$ and $X^{!}$ and a bijection between them. 

In Section 3, we define the vertex functions of quiver varieties and describe the integral representation for type $A$ quiver varieties. We also explain the meaning of descendant insertions.

In Section 4, we give explicit formulas for the vertex functions at the ``last" fixed point, define the identification of parameters, and precisely state our main result.

In order to study the relationship between the vertex functions of $X$ and $X^{!}$, we are led in Section 5 to introduce the following family of $q$-difference operators:
$$
D_{d}(\xx;q,t)= \sum_{\substack{d_1+\ldots +d_k=d\\ d_i\geq 0}} \prod_{i,j=1}^k \frac{(t x_j/x_i)_{d_j}}{(q x_j/x_i)_{d_j}} \frac{(q x_j/x_i)_{d_j-d_i}}{(t x_j/x_i)_{d_j-d_i}} \prod_{i=1}^k  p_i^{d_i}
$$
where $\xx=(x_1,\ldots, x_k)$ is a set of variables, $q$ and $\hbar$ are parameters, 
$$
(x)_n:=\prod_{i=0}^{n-1}(1-x q^i)
$$ 
is the $q$-Pochammer symbol, and the operator $p_i$ shifts $x_i$ by $q$. We prove that this family of operators acts diagonally on Macdonald polynomials with eigenvalues given as follows:
$$
    D_d(\xx;q,t) P_{\mu}(\xx;q,q/t)=  \frac{(t)_d}{(q)_d} P_{d}(q^{\mu_i}(t/q)^{i-1};q,t) P_{\mu}(\xx;q,q/t)
$$
This result can be reinterpreted as an evaluation formula for a $q$-integral of Selberg type, similar to those studied in \cite{Kaneko} and \cite{OWselb}. As discussed in Section 5.4, the previous formula is equivalent to
\begin{multline*}
\int_{[0,\aa]} P_{\mu}(\xx;q,t) h(\xx) \prod_{i,j=1}^{k} \frac{\varphi\left( tx_j/x_i \right)}{\varphi\left( q x_j/x_i \right)}  \frac{\varphi\left( qx_j/a_i \right)}{\varphi\left( t x_j/a_i \right)}  d_q \xx \\
=P_{\mu}(\aa;q,t) \prod_{i=1}^{k}\frac{\varphi(t q^{\mu_i} (t/q)^{i-1} z)}{\varphi(q^{\mu_i} (t/q)^{i-1} z)}
\end{multline*}
where 
$$
h(\xx):=\exp\left(\frac{1}{\ln(q)} \ln(z) \ln(x_1\ldots x_k) \right), \, \, \, \, \varphi(x):=\prod_{i=0}^{\infty} (1-xq^i)
$$
and the $q$-integral is defined as
$$
\int_{[0,\aa]} g(\xx) d_q \xx := \sum_{d_1,\ldots, d_k \geq 0} g(a_1 q^{d_1}, \ldots, a_k q^{d_k})
$$

Using these properties, we prove that the generating function for the family of operators $D_d(\xx;q,t)$ transforms the vertex function of $X_{\lambda}$ into the vertex function of $X^{!}$. Finally, using the formula Theorem 1 from \cite{dinksmir2}
$$
\textbf{V}_{\lambda}(\zz) = \prod_{\square \in \lambda} \frac{\varphi(\hbar z_{\square})}{\varphi(z_{\square})}
$$
where $\lambda$ is the length $n-k$ partition $(k,k,\ldots,k)$, and $z_{\square}$ is a certain monomial depending on $\square \in \lambda$, we are able to explicitly observe the effect of applying the difference operators to $\textbf{V}_{\lambda}(\zz)$. From this it follows that, after an appropriate normalization and identification of parameters, the vertex functions of $X$ and $X^{!}$ at the last fixed point coincide.

\subsection{Acknowledgements}
We would like to thank Andrey Smirnov for suggesting this project and for his guidance throughout its completion. We would also like to thank Ole Warnaar and Ivan Cherednik for enlightening conversations. In particular, Ole Warnaar brought the paper \cite{NSmac} to our attention. We are grateful to Masatoshi Noumi for sharing the unpublished work \cite{NoumiSano} with us.

\section{Three quiver varieties}
In the course of this paper, the relationship between the vertex functions for three quiver varieties will be important. Though we are mainly interested in the first two ($X=T^*Gr(k,n)$ and its symplectic dual $X^{!}$), the third arises in a natural way when considering the vertex function of $X^{!}$. 
\subsection{The quiver variety $X$}
Fix $k,n \in \matN$. We consider the cotangent bundle to the Grassmannian parameterizing $k$-dimensional subspaces in $\matC^n$. This variety can be described as a Nakajima quiver variety corresponding to the quiver with a single vertex and no edges. The dimension is $k$ and the framing dimension is $n$, and we call the corresponding vector spaces $V$ and $W$, respectively. We choose the stability condition given by the $GL(V)$ character
$$
\theta: g \mapsto \det(g)^{-1}
$$

\begin{figure}[ht]
\centering
\begin{tikzpicture}[roundnode/.style={circle, draw=black, thick, minimum size=8mm},squarednode/.style={rectangle, draw=black, thick, minimum size=8mm}] 

\node[squarednode](F) at (0,-2){$n$};
\node[roundnode](V) at (0,0){$k$};
\draw[thick, ->] (F) -- (V);
\end{tikzpicture}
\caption{The quiver data for the variety $X$.} \label{var}
\end{figure}
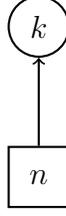

Let $T^*Rep(k,n)$ be the cotangent space of the vector space of framed representations of the quiver with dimenions $k$ and $n$. By definition, $Rep(k,n)=Hom(W,V)$ and so $T^*Rep(k,n)=Hom(W,V)\times Hom(V,W)$. The corresponding Nakajima quiver variety is defined to be the symplectic reduction
$$
X:=T^*Rep(k,n)/\!\!/\!\!/\!\!/_{\!\!\theta} GL(V) := \mu^{-1}(0)^{\theta-ss} / GL(V)
$$
where $\mu: T^*Rep(k,n) \to \mathfrak{gl}(n)^*$ is the moment map for the $GL(V)$ action and $\mu^{-1}(0)^{\theta-ss}$ is the intersection of $\mu^{-1}(0)$ with the $\theta$-semistable points in $T^*Rep(v,w)$. By \cite{GinzburgLectures} Proposition 5.1.5, it follows that $(I,J)\in T^*Rep(v,w)$, is $\theta$ semistable if and only if $I:V\to W$ is injective. The moment map is $\mu(I,J)=I \circ J$. So we see that the quiver variety $X$ is 
$$
X:=\{(I,J)\in T^*Rep(k,n) : \ker I =0, I \circ J =0, \} / GL(V) = T^*Gr(k,n)
$$

\subsection{Fixed points on $X$}
The action of $\bA:=\left(\mathbb{C}^\times\right)^n$ on $W$ induces an action of $\bA$ on $X$, which preserves the symplectic form. Let $\mathbb{C}^\times_\hbar$ act on $X$ by scaling the cotangent directions with character $\hbar^{-1}$. The torus $\mathbb{C}^\times_\hbar$ scales the symplectic form with character $\hbar.$ Let $\bT:=\bA\times \mathbb{C}^\times_h$. We will denote the coordinates on $\bA$ by $\aa=(a_1,\ldots,a_n)$.

The torus $\bT$ fixes the subspaces of $V$ spanned by $k$ coordinate vectors. So there are $\frac{n!}{k!(n-k)!}$ fixed points, naturally indexed by size $k$ subsets of $\{1,\ldots, n\}$.

\subsection{The variety $X^{!}$}
The variety $X^{!}$ dual to $T^*Gr(k,n)$ can be described as a Nakajima quiver variety in the case that $2k \leq n$, see \cite{MirSym1}. For general $n$, such a variety can be described as a bow variety, see \cite{NakBow}. In this paper, we will always assume that $2k\leq n$.

We consider the $A_{n-1}$ quiver with vertices labeled by $1, \ldots, n-1$ and dimension vector $\mathsf{v}=(\mathsf{v}_1,\ldots,\mathsf{v}_{n-1})$ given by
$$\mathsf{v}_i =
\begin{cases}
i & 1 \leq i \leq k-1 \\
k & k \leq i \leq n-k \\
 n-i & n-k+1 \leq i \leq n-1
\end{cases}
$$
and framing dimension $\mathsf{w}=(\mathsf{w}_1,\ldots,\mathsf{w}_{n-1})$ given by $\mathsf{w}_i=\delta_{i,k} + \delta_{i,n-k}$. The corresponding Nakajima quiver variety is defined as the symplectic reduction of $T^*Rep(\mathsf{v},\mathsf{w})$, the cotangent space of framed representations of the $A_{n-1}$ quiver.

\begin{figure}[ht]
\centering
\begin{tikzpicture}[roundnode/.style={circle, draw=black, thick, minimum size=6mm},squarednode/.style={rectangle, draw=black, thick, minimum size=6mm}] 
\node[squarednode](F1) at (6,-2){1};
\node[squarednode](F2) at (9,-2){1};
\node[roundnode](V1) at (1,0){1};
\node[roundnode](V2) at (3,0){2};
\node(V3) at (4.5,0){\ldots};
\node[roundnode](V4) at (6,0){$k$};
\node(V5) at (7.5,0){\ldots};
\node[roundnode](V6) at (9,0){$k$};
\node(V7) at (10.5,0){\ldots};
\node[roundnode](V8) at (12,0){2};
\node[roundnode](V9) at (14,0){1};
\draw[thick, ->] (F1) -- (V4);
\draw[thick, ->] (F2) -- (V6);
\draw[thick, ->] (V1) -- (V2);
\draw[thick, ->] (V2) -- (V3);
\draw[thick, ->] (V3) -- (V4);
\draw[thick, ->] (V4) -- (V5);
\draw[thick, ->] (V5) -- (V6);
\draw[thick, ->] (V6) -- (V7);
\draw[thick, ->] (V7) -- (V8);
\draw[thick, ->] (V8) -- (V9);
\end{tikzpicture}
\caption{The quiver data for the variety $X^{!}$.} \label{dual}
\end{figure}
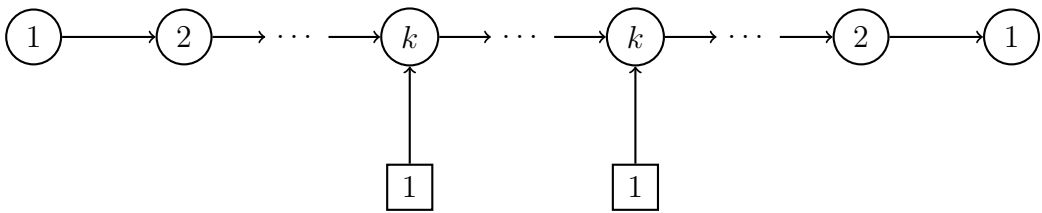

Explicitly,
$$
Rep(\mathsf{v},\mathsf{w})= \bigoplus_{i=1}^{n-2} Hom(V_i,V_{i+1}) \oplus Hom(\matC,V_{k}) \oplus Hom(\matC,V_{n-k})
$$
where $V_i$ is a vector space of dimension $\mathsf{v}_i$ for $1\leq i \leq n-1$.
We use the stability condition given by the $G:=\prod_{i=1}^{n-1} GL(\mathsf{v}_i)$ character
$$
\theta: (g_i) \mapsto \prod_{i=1}^{n-1} \det(g_i)
$$
By definition, the Nakajima quiver variety is the symplectic reduction
$$
X^{!}:=T^*Rep(\mathsf{v},\mathsf{w}) /\!\!/\!\!/\!\!/_{\!\!\theta} G = \mu^{-1}(0)^{\theta-ss} / G
$$
where $\mathfrak{g}=\Lie(G)$, $\mu:T^*Rep(\mathsf{v},\mathsf{w}) \to \mathfrak{g}^* $ is the moment map associated to the $G$ action on $T^*Rep(\mathsf{v},\mathsf{w})$, and $\mu^{-1}(0)^{\theta-ss}$ denotes the intersection of $\mu^{-1}(0)$ with the set of $\theta$-semistable points.

Points in $T^*Rep(\mathsf{v},\mathsf{w})$ are represented by tuples $(X_i,Y_i,I_k,J_k,I_{n-k},J_{n-k})$, where 
$$
X_i: V_i \to V_{i+1}, \, \, \, \, \, Y_i: V_{i+1} \to V_i, \, \, \, \, \, I_{l}:\mathbb{C}\to V_{l}, \, \, \, \, \, J_{l}:V_{l} \to \mathbb{C},\, \, \, \, \,  l \in \{k, n-k\}
$$
By \cite{GinzburgLectures} Proposition 5.1.5, a point is $\theta$-semistable if and only if the image of $I_{k}$ and $I_{n-k}$ generate $\bigoplus_{i=1}^{n-1} V_i$ under the action of all $X_i$ and $Y_i$.

\subsection{Fixed points on $X^{!}$}
The torus $\bA^{!}:=\left(\matC^\times\right)^2$ acts on $X^{!}$ by scaling the framing vector spaces. An additional torus $\matC^\times_{\hbar^{!}}$ acts on $X^{!}$ by scaling the cotangent fiber with character $1/\hbar^{!}$. We let $\bT^{!}:=\bA^{!} \times \mathbb{C}^\times_{\hbar^{!}}$. We will denote the coordinates on $\bA^{!}$ by $(u_1,u_2)$.

A standard argument, see \cite{MirSym1} Section 4, shows that fixed points on the variety $X^{!}$ are indexed by Young diagrams which fit into a $k \times (n-k)$ rectangle.

\subsection{The variety $X_{\lambda}$}
Let $\lambda=(\lambda_1\geq \lambda_2\geq \ldots)$ be a partition. If $l(\lambda)$ is the length of the partition, the Young diagram of the partition is the set of points
$$
\{(x,y)\in \mathbb{Z}^2 : 1\leq x \leq l(\lambda), 1 \leq y \leq \lambda_{x} \}
$$
We do not distinguish between a partition and its Young diagram. If $\square\in \lambda$ has coordinates $(i,j)$, then we define the content of $\square$ to be $c_{\lambda}(\square)=i-j+\lambda_1$. The shift by $\lambda_1$ guarantees that the smallest possible content is 1, which allows for notational agreement with later parts of this paper.

Let $\mathsf{v}=(\mathsf{v}_i)$ where $\mathsf{v}_i=|\{\square \in \lambda : c_{\lambda}(\square)=i\}|$. We define the framing dimension vector to be $\mathsf{w}=(\mathsf{w}_i)$, where $\mathsf{w}_i=\delta_{i,\lambda_1}$. In other words, there is one framing, located at position $\lambda_1$.

Let $X_{\lambda}$ be the corresponding $A_{\infty}$ quiver variety, defined by the stability condition
$$
(g_i) \mapsto \prod_{i} \det(g_i)
$$
where $g_i \in GL(\mathsf{v}_i)$ for $1\leq i \leq \lambda_1+l(\lambda)-1$.

Although the variety $X_{\lambda}$ is geometrically just a point (\cite{dinksmir2} Proposition 1), we will work equivariantly with respect to the torus $\bT_{\lambda}=\mathbb{C}^\times_{\hbar^{!}}$, which acts on the prequotient data by scaling the cotangent directions with character $1/\hbar^{!}$. Thus the tautological bundles over $X_{\lambda}$ carry natural actions of the torus $\bT_{\lambda}$.

\section{Vertex functions for Nakajima quiver varieties}
In this section, we define the main objects of interest. For complete definitions, see \cite{pcmilect} Sections 4-7, \cite{OkBethe}, and \cite{qm}. For various specific examples, see \cite{dinksmir3}, \cite{dinksmir2}, \cite{KorZet}, and \cite{Pushk1}.
\subsection{Equivariant quasimap counts}
Let $X$ be a Nakajima quiver variety from a quiver with vertex set $I$. Let $p \in X^{\bT}$, where $\bT$ is the maximal torus of $Aut(X)$. Associated to $X$ and a choice of degree $\dd\in \mathbb{Z}^{|I|}$, there exists a moduli space of quasimaps 
$$
\qm_p^{\dd}:=\{\text{degree} \, \, d \, \, \text{quasimaps} \, \, f: \mathbb{P}^1 \dashrightarrow X \, \, \text{such that} \, \, f(\infty)=p\} / \cong
$$
where $\cong$ indicates that quasimaps are considered up to isomorphism. It is known that the space $\qm_p^{\dd}$ is a Deligne-Mumford stack of finite type with a perfect obstruction theory, see \cite{qm}. Let $\vss^d$ be the corresponding symmetrized virtual structure sheaf, which is related to the usual virtual structure sheaf provided by the obstruction theory by a twist by a square root of the virtual canonical bundle.

The action of $\bT$ on $X$ induces on action on $\qm_p^{\dd}$. There is an additional action of $\mathbb{C}^\times_q$ by rotation on the domain $\mathbb{P}^1$ of the quasimaps. The moduli spaces $\qm_p^{\dd}$ are not proper. However, the fixed locus $\left(\qm_p^{\dd}\right)^{\bT\times \mathbb{C}^{\times}_q}$ is. Thus the equivariant Euler characteristic is well-defined:
$$
\chi(\vss^{\dd}) \in \mathbb{Q}(\aa,\hbar,q)
$$
where $\hbar$ denotes the character of the symplectic form and $\aa$ denotes the coordinates on the torus $\bA:=\ker(\hbar)\subset \bT$ preserving the symplectic form.

Quasimap spaces come equipped with natural evaluation maps $\text{ev}_b$ for $b\in \mathbb{P}^1$, taking values in the quotient stack $[\mu^{-1}(0)/G]$, where $\mu$ is the moment map and $G$ is the gauge group in the definition of the quiver variety. Given a class $\tau\in K_{\bT}(X)$, there is an associated class $\tau_{\text{stack}}\in K_{\bT}([\mu^{-1}(0)/G])$. We can pullback this class under $\text{ev}_0$ to a class on $\qm_p^{\dd}$, which we will also denote by $\tau$.

\begin{Definition}
The vertex function of $X$ at $p$ with descendant $\tau$ is defined as the generating function of the $\tau$-twisted equivariant Euler characteristics of $\qm^{\dd}_p$:
$$
\textbf{V}_p(\zz) \left\langle \tau \right\rangle := \sum_{\dd} \chi(\vss^{\dd} \otimes \tau) \zz^\dd
$$
where the sum is taken over the cone of degrees such that the space $\qm_p^{\dd}$ is nonempty (see \cite{pcmilect} Section 7.2). Here $\zz^\dd=z_1^{d_1}\ldots z_{|I|}^{d_{|I|}}$.
\end{Definition}
The variables $z_i$ are known as the K\"ahler parameters. The vertex function also depends on the equivariant parameters, which we sometimes write as an argument below.

\subsection{Integral form of vertex functions}
It is known that the vertex functions for quiver varieties can be represented as integrals, see \cite{OkBethe}. Let $X$ be a Nakajima quiver variety with dimension and framing dimension vectors $\mathsf{v}$ and $\mathsf{w}$, and associated vector spaces $V_i$ and $W_i$. The index $i$ takes values in $I$, the vertex set of the quiver. Let $G=\prod_{i\in I} GL(V_i)$.

For a character $x_1+\ldots+x_m\in K_{\bT}(X)$, we define 
$$
\Phi(x_1+\ldots +x_m):= \varphi(x_1)\ldots \varphi(x_m), \, \, \, \, \, \, \, \, \varphi(x):=\prod_{i=0}^{\infty} (1-x q^i)
$$
We extend $\Phi$ by linearity to polynomials with negative coefficients. 

Let $\mathcal{P}$ be the bundle over $X$ associated to the virtual $G$-module
$$
\bigoplus_{i \to j} Hom(V_i,V_j) + \bigoplus_{i \in I} Hom(W_i,V_i) - \bigoplus_{i \in I} Hom(V_i,V_i)
$$
where $i\to j$ denotes the sum over the arrows of the quiver. The quiver variety is equipped with a collection of tautological bundles $\mathcal{V}_i$, $i \in I$. Let $x_{i,j}$, $1\leq j \leq \mathsf{v}_i$ be the Chern roots of the bundle $\mathcal{V}_i$. It is known that the tautological bundles generate the $K$-theory of $X$, see \cite{kirv}.

As a formal expression in the Chern roots and K\"ahler parameters, we define the following:
$$
f(\xx,\zz):= \exp\left(\frac{1}{\ln(q)}\sum_{i \in I} \sum_{j=1}^{\mathsf{v}_i} \ln(z_i) \ln(x_{i,j})\right) = \exp\left(\frac{1}{\ln(q)}\sum_{i \in I} \ln(z_i) \ln\left(\mathcal{L}_i\right) \right)
$$
where $\mathcal{L}_i=\det\left(\mathcal{V}_i\right)$. Shifting a Chern root by $q$ gives the following transformation property:
$$
f(x_{1,1},\ldots,qx_{i,j},\ldots,\zz) = z_i f(\xx,\zz)
$$
For a function $g(\xx)$ of the Chern roots of the tautological bundles, symmetric in the variables $x_{i,1}, \ldots, x_{i,\mathsf{v}_i}$ for each $i$ and a $\bT$-fixed point $p$, we define the $q$-integral as
$$
\int_0^p g(\xx) d_q \xx := \sum_{d_{i,j}=0}^{\infty} g(q^{\dd} \xx_p) 
$$
where each $d_{i,j}$ is summed from 0 to $\infty$ and $q^{\dd} \xx_p$ denotes the substitution of the weights of the tautological bundles $\mathcal{V}_i$ at $p$ in place of the Chern roots, shifted by $q^{d_{i,j}}$. Then it is known (see \cite{OkBethe}) that the vertex function of $X$ at $p$ with descendant $\tau$ is equal to 
\begin{multline}\label{qint}
  \textbf{V}_p(\zz)\left\langle \tau \right \rangle \\
  =\Phi((q-\hbar)\mathcal{P}_p)^{-1} f(\xx_p,\zz)^{-1}  \int_0^p \Phi\left((q-\hbar)\mathcal{P}(\xx)\right) f(\xx,\zz) \tau(\xx) d_q\xx
\end{multline}
where $\mathcal{P}_p$ is the $\bT$-character of $\mathcal{P}$ at $p$, and $\mathcal{P}(\xx)$ and $\tau(\xx)$ denote the expression of the classes $\mathcal{P}$ and $\tau$ in terms of the Chern roots $\xx$. Since all expressions in the integral are symmetric functions of the Chern roots, the substitution $\xx_p$ is well-defined.

From the transformation properties of $f$ and $\varphi$, it is clear that the summation on the right hand side of (\ref{qint}) is a power series in the K\"ahler parameters with coefficients in $\mathbb{Q}(\aa,\hbar,q)$. The $q$-integral formula for the vertex function arises from computing the $\tau$-twisted equivariant Euler characteristics $\chi(\vss^{\dd}\otimes \tau)$ by equivariant localization.

\section{Symplectic duality of $X$ and $X^{!}$}
In this section, we investigate the relationship between the vertex functions of $X$ and $X^{!}$ at a particular fixed point. In what follows, the restriction of the vertex functions of $X$ and $X^{!}$ to fixed points $p$ and $p^{!}$ will be denoted by $\textbf{V}_p(\aa,z)$ and $\textbf{V}^{!}_{p^{!}}(u,\zz)$, respectively. The vertex function of $X_{\lambda}$ will be denoted by $\textbf{V}_{\lambda}(\zz)$.

\subsection{Bijection on fixed points}
As explained above, the $\bT^{!}$-fixed points on $X^{!}$ are naturally indexed by Young diagrams that fit into a $k\times (n-k)$ rectangle. We can consider the path traced out by the border of the diagram. This path is completely determined by specifying which of the $n$ segments of the path move vertically. In order for the Young diagram to fit into a $k \times (n-k)$ rectangle, there must be exactly $k$ such locations, which corresponds to a size $k$ subset of $\{1,\ldots, n\}$. This gives a natural bijection between fixed points on $X^{!}$ and fixed points on $X$. See Figure \ref{bij} for an example.

\begin{figure}[ht]
\centering
\begin{tikzpicture}[roundnode/.style={circle, draw=black, thick, minimum size=8mm},squarednode/.style={rectangle, draw=black, thick, minimum size=8mm}] 
\draw[thick, dotted] (0,0) -- (3,0);
\draw[thick, dotted] (0,1) -- (4,1);
\draw[thick, dotted] (0,2) -- (1,2);
\draw[thick,dotted] (3,2)--(4,2);
\draw[thick, dotted] (1,3) -- (4,3);
\draw[thick, dotted] (0,0) -- (0,3);
\draw[thick, dotted] (1,0) -- (1,2);
\draw[thick, dotted] (2,0) -- (2,3);
\draw[thick, dotted] (3,2) -- (3,3);
\draw[thick, dotted] (4,0) -- (4,3);
\draw[thick, -] (0,3)--(1,3)--(1,2)--(3,2)--(3,0)--(4,0);
\end{tikzpicture}
\caption{A fixed point where $k=3$ and $n=7$. The fixed point corresponds to the subset $\{2,5,6\}\subset \{1,2,3,4,5,6,7\}$} \label{bij}
\end{figure}

In particular, we consider the fixed point $p$ on $X$ given by the subset $\{n-k+1,\ldots,n\}$, or equivalently, the fixed point $p^{!}$ on $X^{!}$ given by the length $n-k$ partition $\lambda=(k,k,\ldots,k)$. For the remainder of this paper, we will always assume that $p$ and $p^{!}$ denote these fixed points and we will always use $\lambda$ to denote the partition $(k,k,\ldots,k)$.

\subsection{Exchange of K\"ahler and equivariant parameters}
Let $\widetilde{\bA}$ be the cokernel of diagonal inclusion $\mathbb{C}^\times \to \bA$ and let $\widetilde{\bA}^{!}$ be the cokernel of the diagonal inclusion $\mathbb{C}^\times \to \bA^{!}$.

Define the map $\kappa: \bK^{!}\times \widetilde{\bA}^{!} \times \mathbb{C}^\times_{\hbar^{!}} \times \mathbb{C}^\times_{q} \to \bK\times \widetilde{\bA} \times \mathbb{C}^\times_{\hbar} \times \mathbb{C}^\times_{q} $ by
\begin{align*}
    &z_i\mapsto
\begin{cases}
 \frac{1}{\hbar} \frac{a_{i+1}}{a_{i}} & 1 \leq i <k \\
\frac{a_{i+1}}{a_{i}}  & k \leq i < n-k \\
\hbar \frac{a_{i+1}}{a_{i}} & n-k \leq i \leq n-1
\end{cases} \\
&\hbar^! \mapsto \frac{q}{\hbar} \\
& \frac{u_1}{u_2} \mapsto z \left(\frac{\hbar}{q}\right)^k \\
& q \mapsto q
\end{align*}

This map is an isomporphism of tori, and gives an induced map $\mathbb{C}(q,\hbar^{!},u)[[\zz]] \to \mathbb{C}(q,\hbar,z)[[\aa]]$ where $u=u_1/u_2$, which we also denote by $\kappa$. 

\subsection{Vertex Function of $X$}
At the fixed point $p$, the $\bT$-character of the tautological bundle $\mathcal{V}$ on $X$ is 
$$
\mathcal{V}=a_{n-k+1}+\ldots +a_n \in K_{\bT}(p)
$$
Using the integral representation, it is straightforward to see that the vertex function of $T^*Gr(k,n)$ at the fixed point $p$ is equal to
\begin{align}
\textbf{V}_p(\aa,z) &= 
\sum_{d_1,\ldots,d_k=0}^{\infty} \prod_{i=1}^{n} \prod_{j=n-k+1}^n \frac{\left(\hbar a_j/a_i\right)_{d_j}}{\left(q a_j/a_i\right)_{d_j}} \prod_{i,j=n-k+1}^n \frac{\left(q a_j/a_i\right)_{d_j-d_i}}{\left(\hbar a_j/a_i\right)_{d_j-d_i}} z^{d_1+\ldots d_k}
\end{align}

\subsection{Vertex function of $X^{!}$}
At the fixed point $p^{!}$, the character of the tautological bundle $\mathcal{V}_i$ is 
$$
\mathcal{V}_i = \begin{cases}
\sum_{j=1}^{\mathsf{v}_i}  u_1 \hbar^{j-i-1} & 1\leq i< k \\
\sum_{j=1}^{\mathsf{v}_i} u_1 \hbar^{j-1} & k \leq i \leq n-1
\end{cases} 
$$
Using the integral representation, it is straightforward to see that the vertex function of $X^{!}$ at the fixed point $p^{!}$ is equal to
\begin{align} \nonumber
\textbf{V}^{!}_{p^{!}}(u,\zz) &= \sum_{d_{i,j}=0}^{\infty} \prod_{i=1}^{\mathsf{v}_k} \frac{(\hbar^{i})_{d_{k,i}}}{(q\hbar^{i-1})_{d_{k,i}}} \prod_{i=1}^{\mathsf{v}_{n-k}} \frac{(\hbar^{i} u)_{d_{n-k,i}}}{(q \hbar^{i-1} u)_{d_{n-k,i}}} \prod_{i=1}^{k-1} \prod_{j=1}^{\mathsf{v}_{i}} \prod_{l=1}^{\mathsf{v}_{i+1}} \frac{(\hbar^{l-j})_{d_{i+1,l}-d_{i,j}}}{(q \hbar^{l-j-1})_{d_{i+1,l}-d_{i,j}}} \\
&  \prod_{i=k}^{n-1} \prod_{j=1}^{\mathsf{v}_{i}} \prod_{l=1}^{\mathsf{v}_{i+1}} \frac{(\hbar^{l-j+1})_{d_{i+1,l}-d_{i,j}}}{(q \hbar^{l-j})_{d_{i+1,l}-d_{i,j}}} \prod_{i=1}^{n-1} \prod_{i,l=1}^{\mathsf{v}_i} \frac{(q \hbar^{l-j})_{d_{i,l}-d_{i,j}}}{(\hbar^{l-j+1})_{d_{i,l}-d_{i,j}}} \prod_{i=1}^{n-1} \prod_{j=1}^{\mathsf{v}_i} z_i^{d_{i,j}}
\end{align}
where $u=u_1/u_2$ and the sum is taken over $d_{i,j}$ for $1\leq i \leq n-1$, $1 \leq j \leq \mathsf{v}_i$ from 0 to $\infty$.

\begin{Note}
In general, the degrees $d_{i,j}$ must lie inside of a certain cone. But in the case of the vertex function above, the specific form of the coefficients gives 0 whenever the degrees lie outside this cone. Hence for uniformity, we prefer to think of each $d_{i,j}$ as running from 0 to $\infty$.
\end{Note}

\subsection{Vertex function of $X_{\lambda}$}

Since the same quiver is used to define both $X^{!}$ and $X_{\lambda}$, we can canonically identify the K\"ahler parameters for these two varieties. We also use the same notation $\hbar^{!}$ for the character of the symplectic form for these two varieties. Thus $\textbf{V}_{\lambda}(\zz)$ is a power series in $z_i$ for $1\leq i \leq n-1$, with coefficients in $\mathbb{Q}(q,\hbar^!)$. Explicitly, we have
\begin{align}\label{vpt} \nonumber
\textbf{V}_{\lambda}(\zz) &= \sum_{d_{i,j}=0}^{\infty} \prod_{i=1}^{\mathsf{v}_k} \frac{(\hbar^{i})_{d_k}}{(q\hbar^{i-1})_{d_k}} \prod_{i=1}^{k-1} \prod_{j=1}^{\mathsf{v}_{i}} \prod_{l=1}^{\mathsf{v}_{i+1}} \frac{(\hbar^{l-j})_{d_{i+1,l}-d_{i,j}}}{(q \hbar^{l-j-1})_{d_{i+1,l}-d_{i,j}}} \\
&  \prod_{i=k}^{n-1} \prod_{j=1}^{\mathsf{v}_{i}} \prod_{l=1}^{\mathsf{v}_{i+1}} \frac{(\hbar^{l-j+1})_{d_{i+1,l}-d_{i,j}}}{(q \hbar^{l-j})_{d_{i+1,l}-d_{i,j}}} \prod_{i=1}^{n-1} \prod_{i,l=1}^{\mathsf{v}_i} \frac{(q \hbar^{l-j})_{d_{i,l}-d_{i,j}}}{(\hbar^{l-j+1})_{d_{i,l}-d_{i,j}}} \prod_{i=1}^{n-1}z_i^{d_i}
\end{align}
where the sum is taken over $d_{i,j}$ for $1\leq i \leq n-1$, $1 \leq j \leq \mathsf{v}_i$ from 0 to $\infty$ and $\mathsf{v}_i=|\{\square \in \lambda : c_{\lambda}(\square)=i\}|$ as before. 

Let 
\begin{equation*}
   z_{\square} := \prod_{\square' \in H_{\lambda}(\square)} \widehat{z}_{c(\square')}
\end{equation*}
where the shifted K\"ahler parameters $\widehat{z}_i$ are
\begin{equation*}
    \widehat{z}_i:=\left(\frac{\hbar}{q}\right)^{\sigma_{\lambda}(i)} z_i \ \ \ \text{where} \ \ \   \sigma_{\lambda}(i):= \begin{cases} 
      \textsf{v}_{i-1}-\textsf{v}_{i} & \text{if} \ \ i \neq 0 \\
 \textsf{v}_{i-1}-\textsf{v}_{i}+1 & \text{if} \ \ i = 0
   \end{cases} 
\end{equation*}
and $H_{\lambda}(\square)$ denotes the set of boxes in the hook based at $\square$ in $\lambda$. If $\square$ has coordinates $(i,j)$ in the Young diagram for $\lambda$, then 
$$
H_{\lambda}(\square)=\{(i,m) \in \lambda : m\geq j\} \cup \{(m,j) \in \lambda : m\geq i \}
$$

\begin{Proposition}[\cite{dinksmir2} Theorem 1]\label{verpoint} For $|q|<1$, $\textbf{V}_{\lambda}(\zz)$ is the power series expansion of the function
$$
\textbf{V}_{\lambda}(\zz) = \prod_{\square \in \lambda} \frac{\varphi(\hbar z_{\square})}{\varphi(z_{\square})}
$$
holomorphic in the polydisk $|z_{\square}|<1$.
\end{Proposition}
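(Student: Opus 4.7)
The plan is to prove the identity by comparing power series expansions in the shifted K\"ahler parameters $\widehat{z}_i$. Expanding the right-hand side via the $q$-binomial theorem
$$\frac{\varphi(\hbar z_\square)}{\varphi(z_\square)} = \sum_{n_\square \geq 0} \frac{(\hbar)_{n_\square}}{(q)_{n_\square}} z_\square^{n_\square}$$
yields a sum indexed by nonnegative integer tuples $(n_\square)_{\square \in \lambda}$. The first step is therefore to exhibit a combinatorial bijection between such tuples and the tuples $(d_{i,j})$ indexing the summation defining $\textbf{V}_\lambda(\zz)$.

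Since the Chern roots of $\mathcal{V}_i$ are in bijection with the boxes of $\lambda$ of content $i$, one can relabel the index $(i,j)$ as a single box index and work with a tuple $(d_\square)_{\square \in \lambda}$ of nonnegative integers. Matching the pure monomial part on the two sides, and accounting for the $(q/\hbar)^{\sigma_\lambda(i)}$ twist built into $\widehat{z}_i$, forces the ansatz
$$d_\square = \sum_{\substack{\square' \in \lambda \\ \square \in H_\lambda(\square')}} n_{\square'},$$
i.e., $d_\square$ totals $n_{\square'}$ over all boxes $\square'$ whose hook contains $\square$. This map is visibly a bijection: it is unitriangular with respect to the partial order on $\lambda$ in which $\square$ precedes $\square'$ whenever $\square \in H_\lambda(\square')$.

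Next I would match the $q$-Pochhammer factors under this substitution. The products of ratios such as $(\hbar^{l-j})_{d_{i+1,l}-d_{i,j}}/(q \hbar^{l-j-1})_{d_{i+1,l}-d_{i,j}}$ in $\textbf{V}_\lambda(\zz)$ have the structure of a telescoping cascade along adjacent vertices of the quiver. Each difference $d_\square - d_{\square'}$ with $\square$ and $\square'$ at adjacent content becomes, under the bijection, a sum of $n$-variables indexed by boxes lying in precisely one of the two corresponding hooks; repeated use of the identity $(x)_{a+b} = (x)_a \, (x q^a)_b$ then collapses the cascade to the clean product $\prod_\square (\hbar)_{n_\square}/(q)_{n_\square}$, simultaneously absorbing the $(q/\hbar)^{\sigma_\lambda(i) d_i}$ discrepancy between $z_i$ and $\widehat{z}_i$. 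A cleaner alternative is induction on $|\lambda|$ by stripping off an outer corner box $\square_0$: one shows
$$\textbf{V}_\lambda(\zz) = \frac{\varphi(\hbar z_{\square_0})}{\varphi(z_{\square_0})} \, \textbf{V}_{\lambda \setminus \square_0}(\zz'),$$
for an appropriately shifted tuple $\zz'$, reducing each inductive step to a single $q$-Gauss style summation at vertex $c(\square_0)$.

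The main obstacle is the combinatorial bookkeeping of the hook decomposition and the matching of the Pochhammer cascades along arms versus legs of the hooks, which is responsible for the precise form of the shift from $z_i$ to $\widehat{z}_i$ and for the boundary case in the definition of $\sigma_\lambda$. Once the bijection and the telescoping identification are in place, convergence and holomorphy in the polydisk $|z_\square| < 1$ follow from the absolute convergence of each factor $\varphi(x)$ for $|x|<1$ together with the uniform absolute convergence of the multiple sum defining $\textbf{V}_\lambda(\zz)$ in the same region.
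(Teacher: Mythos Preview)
The paper does not prove this proposition; it is quoted as Theorem~1 of \cite{dinksmir2} and used as an input. There is therefore nothing in the present paper against which to compare your argument.

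On its own merits, your proposal identifies a plausible mechanism---the hook-indexed change of variables $d_\square = \sum_{\square' : \square \in H_\lambda(\square')} n_{\square'}$ together with a telescoping collapse of the Pochhammer factors---but does not actually execute it. You assert that repeated use of $(x)_{a+b} = (x)_a (xq^a)_b$ ``collapses the cascade to the clean product $\prod_\square (\hbar)_{n_\square}/(q)_{n_\square}$,'' and that the corner-stripping induction ``reduces each inductive step to a single $q$-Gauss style summation,'' yet neither computation is carried out, and you explicitly name the hook bookkeeping as ``the main obstacle'' without resolving it. As written this is a proof plan rather than a proof; to complete it you would need to perform the telescoping (or the inductive step) explicitly and verify that the boundary term in $\sigma_\lambda$ at $i=0$ falls out correctly.
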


Proposition \ref{verpoint} holds for quiver varieties associated to general partitions. But for our specific case of $\lambda=(k,k,\ldots,k)$, we obtain
\begin{Proposition}\label{prform}
$$
\kappa\left( \textbf{V}_{\lambda}(\zz)\right) = \prod_{j=n-k+1}^{n} \prod_{i=1}^{n-k} \frac{\varphi(q a_j/a_i) }{\varphi(\hbar a_j/a_i)}
$$
\end{Proposition}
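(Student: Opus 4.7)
The plan is to derive the identity from Proposition \ref{verpoint} by pushing $\kappa$ through the product $\prod_{\square \in \lambda} \varphi(\hbar^! z_\square)/\varphi(z_\square)$ factor by factor and matching the result to the desired right-hand side via a bijection between boxes of $\lambda$ and index pairs $(i,j)$.

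First, I would exploit the rectangular shape of $\lambda=(k,\ldots,k)$ of length $n-k$. For a box $\square=(i_0,j_0)\in\lambda$, the hook $H_\lambda(\square)$ consists of an arm and a leg meeting at $\square$ whose box contents together form the contiguous interval $[i_0,\,n-j_0]$, each value occurring exactly once. Hence
\[
z_\square \;=\; \prod_{c=i_0}^{n-j_0} \widehat{z}_c.
\]
Moreover, the map $(i_0,j_0)\mapsto(i,j):=(i_0,\,n-j_0+1)$ is a bijection from $\{(i_0,j_0):1\le i_0\le n-k,\,1\le j_0\le k\}$ onto the set $\{(i,j):1\le i\le n-k,\,n-k+1\le j\le n\}$ of index pairs on the right-hand side.

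Second, I would compute $\kappa(\widehat z_c)$ for each content $c\in[1,n-1]$ by combining the piecewise rule defining $\kappa$ on the $z_c$, the relation $\kappa(\hbar^!/q)=1/\hbar$, and the explicit values $\mathsf{v}_c\in\{c,\,k,\,n-c\}$ on the three ranges $[1,k],\,[k,n-k],\,[n-k,n-1]$. Telescoping then yields
\[
\kappa(z_\square) \;=\; \prod_{c=i_0}^{n-j_0} \kappa(\widehat{z}_c) \;=\; \hbar\cdot\frac{a_{n-j_0+1}}{a_{i_0}},
\]
the ratio $a_{n-j_0+1}/a_{i_0}$ coming from the telescoping of the factors $a_{c+1}/a_c$, with a single surviving factor of $\hbar$ arising from the combined jumps of the piecewise rules for $\kappa(z_c)$ and $\sigma_\lambda(c)$ at the boundary contents.

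Third, since $\kappa(\hbar^!)=q/\hbar$, each factor becomes
\[
\kappa\!\left(\frac{\varphi(\hbar^! z_\square)}{\varphi(z_\square)}\right)
\;=\;\frac{\varphi(q\,a_{n-j_0+1}/a_{i_0})}{\varphi(\hbar\,a_{n-j_0+1}/a_{i_0})},
\]
and taking the product over $\square\in\lambda$, reindexed via the bijection above, reproduces exactly the right-hand side of the proposition.

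The main obstacle is the bookkeeping of $\hbar$-exponents in the telescoping step: both $\kappa(z_c)$ and the correction $(\hbar^!/q)^{\sigma_\lambda(c)}$ defining $\widehat z_c$ exhibit jumps at the boundary contents $c=k$ and $c=n-k$, and one must verify that their combined contribution over every hook interval $[i_0,n-j_0]$ leaves precisely one net factor of $\hbar$. Once this cancellation is checked case-by-case on the position of $(i_0,j_0)$ relative to the two boundary columns/rows, the rest of the argument is a formal substitution.
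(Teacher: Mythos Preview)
Your proposal is correct and follows precisely the approach indicated by the paper: the paper's proof consists of the single sentence ``This follows from a direct calculation using Proposition~\ref{verpoint} and the definitions of $z_{\square}$ and $\kappa$,'' and your argument is exactly that direct calculation spelled out --- identifying the hook contents as the interval $[i_0,n-j_0]$, telescoping $\kappa(\widehat z_c)$, and reindexing via $(i_0,j_0)\mapsto(i_0,n-j_0+1)$. The one point worth flagging is that the paper's definitions of $\widehat z_i$ and $\sigma_\lambda$ in this section carry some notational slippage (the $\hbar$ there is really $\hbar^!$, and the ``$i=0$'' case in $\sigma_\lambda$ refers to the framing content, which after the paper's shift is $c=k$); your computation implicitly uses the correct interpretation, and once read this way the net $\hbar$-exponent over each hook is indeed $1$, as you claim.
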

\begin{proof}
This follows from a direct calculation using Proposition \ref{verpoint} and the definitions of $z_{\square}$ and $\kappa$.
\end{proof}

\subsection{Main theorem, coincidence of normalized vertex functions}
The main result in this paper is the coincidence of the vertex functions for $X$ and $X^{!}$, after normalization by a simple prefactor. Let
$$
\widetilde{\textbf{V}}_{p^{!}}^{!}(u,\zz)= \textbf{V}_{\lambda}(\zz)^{-1} \textbf{V}_{p^{!}}^{!} (u,\zz)
$$
and 
$$
\widetilde{\textbf{V}}_{p}(\aa,z) = \prod_{i=1}^{k} \frac{\varphi( (\hbar/q)^{i-1} z)}{\varphi(\hbar (\hbar/q)^{i-1} z)}\textbf{V}_p(\aa,z)
$$
Using the $q$-binomial theorem and Proposition \ref{prform}, $\widetilde{\textbf{V}}_{p^{!}}^{!}(u,\zz)$ can be expanded as a power series in the K\"ahler parameters. Applying the map $\kappa$, we obtain an element of $\mathbb{C}(q,\hbar,z)[[\aa]]$. Each term involving $z$ appears in the form 
$$
\frac{1-wz}{1-w'z}
$$
where $w$ and $w'$ are monomials in $q$ and $\hbar$. Thus, we can expand each term as a power series in $z$ and obtain an element of $\mathbb{C}(q,\hbar)[[\aa,z]]$. Collecting powers of $z$ gives
$$
\kappa \left( \widetilde{\textbf{V}}^{!}_{p^{!}}(u,\zz)\right) = \sum_{d=0}^{\infty} c_{d}(\aa,q,\hbar) z^d
$$
where $c_{d}(\aa,q,\hbar)\in \mathbb{C}(q,\hbar)[[\aa]]$.

Similarly, applying the $q$-binomial theorem to the prefactor, we identify $\widetilde{\textbf{V}}_p(\aa,z)$ as an element of $\mathbb{C}(q,\hbar,\aa)[[z]]$.

\begin{Theorem}\label{main}
In the notation above, each $c_{d}(\aa,q,\hbar)$ is the Taylor series expansion of a rational function of $\aa$ holomorphic in a punctured neighborhood of $0$. Furthermore, as elements of $\mathbb{C}(q,\hbar,\aa)[[z]]$, we have the equality:
$$
\widetilde{\textbf{V}}_p(\aa,z) =\kappa\left( \widetilde{\textbf{V}}^{!}_{p^{!}}(u,\zz)  \right)
$$
\end{Theorem}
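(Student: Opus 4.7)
The plan is to express the normalized vertex function $\widetilde{\textbf{V}}^{!}_{p^{!}}(u,\zz)$ as the result of applying the generating function $\sum_{d\geq 0} D_d(\xx;q,\hbar) z^d$ of the Section~5 operators to $\textbf{V}_{\lambda}(\zz)$, evaluate that action using the diagonal action of $D_d$ on Macdonald polynomials, and then match the outcome term by term with $\widetilde{\textbf{V}}_p(\aa,z)$ using the explicit product formulas in Proposition~\ref{verpoint} and Proposition~\ref{prform}.

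First, I would compare the summands defining $\textbf{V}^{!}_{p^{!}}(u,\zz)$ and $\textbf{V}_{\lambda}(\zz)$. The two quivers differ precisely by a one-dimensional framing with equivariant weight $u$ at the vertex $n-k$, so the ratio of their summands is the explicit $u$-dependent factor $\prod_{i=1}^{\mathsf{v}_{n-k}} (\hbar^{i}u)_{d_{n-k,i}}/(q\hbar^{i-1}u)_{d_{n-k,i}}$. Under the integral representation (\ref{qint}), this is exactly $\Phi((q-\hbar)\mathrm{Hom}(\matC_u,\mathcal{V}_{n-k}))$ inserted as a descendant class into the vertex function of $X_{\lambda}$. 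The aim is to recognize this descendant insertion, after expansion as a power series in the combined parameter $z\sim u(q/\hbar)^k$ dictated by $\kappa$, as the action on the Chern roots $\xx=(x_1,\ldots,x_k)$ of $\mathcal{V}_{n-k}$ of the operator $\sum_d D_d(\xx;q,\hbar) z^d$. This comes down to a formal manipulation of $q$-Pochhammer symbols: the coefficient of $z^d$ in the expanded descendant, rewritten in the $x_i$, must match the explicit sum defining $D_d$ once one identifies the shifts $p_i$ with the $q^{d_{n-k,i}}$ shifts appearing in the $q$-integral.

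Next, I would invoke the eigenvalue formula $D_d P_{\mu}(\xx;q,q/\hbar)=\tfrac{(\hbar)_d}{(q)_d} P_d(q^{\mu_i}(\hbar/q)^{i-1};q,\hbar)\, P_{\mu}(\xx;q,q/\hbar)$. At the $\bT_\lambda$-fixed point of $X_\lambda$, the Chern roots of $\mathcal{V}_{n-k}$ are the principal specialization $(1,\hbar,\ldots,\hbar^{k-1})$ (up to overall scalar), which is precisely the specialization at which Macdonald polynomials factorize. Proposition~\ref{verpoint} then gives a factorized expression for $\textbf{V}_{\lambda}$, and applying $D_d$ in this specialization reduces to an evaluation of a single Macdonald polynomial $P_d$ in closed form. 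Applying $\kappa$ and inserting Proposition~\ref{prform} yields an explicit formula for $\kappa(\widetilde{\textbf{V}}^{!}_{p^{!}})$: its prefactor matches the $\varphi$-ratio built into the definition of $\widetilde{\textbf{V}}_p(\aa,z)$, and its remaining summation reproduces the hypergeometric series defining $\textbf{V}_p(\aa,z)$ in Section~4.3, term by term. The holomorphy claim for each $c_d(\aa,q,\hbar)$ is then manifest from the fact that each coefficient is now a finite product of rational factors.

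The main obstacle I anticipate is the first step: pinning down the precise identification of the descendant insertion with the operator $\sum_d D_d z^d$. It requires careful bookkeeping between the $q$-integral representation, where factors are evaluated at Chern roots shifted by $q^{d_{i,j}}$, and the operator representation, where $p_i$ acts as a $q$-shift in $x_i$, as well as tracking the change of variables imposed by $\kappa$ and the normalization shifts $\widehat{z}_i$ used in Proposition~\ref{verpoint}. Once this descendant-to-operator identification is established and the eigenvalue property of $D_d$ is invoked, the rest of the argument is a direct computation of the kind carried out in \cite{dinksmir3}, requiring only that the combinatorics of hooks in the rectangular partition $\lambda=(k,\ldots,k)$ cooperate with the $\kappa$-image of the K\"ahler variables, which can be checked directly.
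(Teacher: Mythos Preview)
Your overall architecture matches the paper: write $\textbf{V}^{!}_{p^{!}}$ as a descendant insertion into $\textbf{V}_{\lambda}$ (Proposition~\ref{reducetopoint}), relate that insertion to $D_d$ acting on $\kappa(\textbf{V}_{\lambda})$ (Theorem~\ref{insertion}), and then compute $D_d$ on the explicit product formula (Proposition~\ref{prform}) to recover the coefficients of $\textbf{V}_p$ (Proposition~\ref{vgrcoeff}). The prefactors then line up as you say.

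The gap is in your middle step. You describe the identification ``descendant insertion $=$ action of $\sum_d D_d z^d$'' as a formal manipulation of $q$-Pochhammer symbols, matching the shifts $p_i$ with the $q^{d_{n-k,i}}$ in the $q$-integral. It is not. The operator $D_d$ does not act on the Chern roots $\xx$ of $\mathcal{V}_{n-k}$ on $X_\lambda$; it acts on the parameters $\aa=(a_{n-k+1},\ldots,a_n)$, which appear only \emph{after} applying $\kappa$ to the K\"ahler variables of $X_{\lambda}$. To make the eigenvalue formula for $D_d$ do any work, one first needs $\kappa(\textbf{V}_{\lambda})$ to be written as a sum of Macdonald polynomials $P_{\mu}(\aa;q,q/\hbar)$ in those variables. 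That is the content of Lemma~\ref{sumform}, which in turn rests on decomposing $\textbf{V}_{\lambda}$ through the flag-variety vertex function (Proposition~\ref{vertexsum}) and identifying the latter with the Noumi--Shiraishi function $p(\aa;\ss;q,q/\hbar)$ (Lemma~\ref{flag}, equation~(\ref{spec})). Only once each term of $\kappa(\textbf{V}_{\lambda})$ carries a factor $P_{\mu}(\aa;q,q/\hbar)$ can you apply Theorem~\ref{diagonal}; the resulting eigenvalue $\frac{(\hbar)_d}{(q)_d}P_{(d)}(q^{\mu_i}(\hbar/q)^{i-1};q,\hbar)$ is then recognized as the descendant $P_{(d)}(\xx)$ evaluated at the shifted Chern roots, since $\mu$ records the degrees $d_{n-k,i}$.

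Your remark about the ``principal specialization $(1,\hbar,\ldots,\hbar^{k-1})$'' of the Chern roots is correct but does not enter the argument in the way you suggest. The product formula of Proposition~\ref{verpoint} is used separately, \emph{after} Theorem~\ref{insertion} is established, to compute $\kappa(\textbf{V}_{\lambda})^{-1}D_d\,\kappa(\textbf{V}_{\lambda})$ directly (Proposition~\ref{vgrcoeff}); this computation is an elementary $q$-shift of $\varphi$-factors and does not involve the eigenvalue theorem at all. So the two uses you are conflating --- the flag/Macdonald decomposition (to prove Theorem~\ref{insertion}) and the product formula (to evaluate $D_d$ on $\kappa(\textbf{V}_{\lambda})$) --- are logically independent inputs, and your sketch omits the first.
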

This theorem will be proven below.

\begin{Note}
Like the normalizing factor for the vertex function of $X^{!}$, the normalizing factor for the vertex function of $X$ is the vertex function for a simple quiver variety related to $X$. In this case, it is the vertex function for the zero dimensional quiver variety $T^*Gr(k,k)$, obtained from the one-vertex quiver with dimension and framing dimension both equal to $k$.
\end{Note}

\section{Difference operators and descendant insertions}

In this section, we introduce some machinery and prove Theorem \ref{main}.

\subsection{Macdonald polynomials}
Let $\mathcal{F}=\mathbb{C}[x_1,\ldots,x_k]^{S_k}\otimes \mathbb{C}(q,t)$ be the ring of symmetric polyomials in $x_1,\ldots,x_k$. Following \cite{mac}, we define an inner product on $\mathcal{F}$ by
\begin{equation}
    \langle p_{\lambda}, p_{\mu} \rangle := \delta_{\lambda,\mu} \prod_{n \geq 1} n^{m_n} m_n! \prod_{i=1}^{l(\lambda)} \frac{1-q^n}{1-t^n} \ \ \  \text{where}  \ \ \  m_n=|\{k \mid \lambda_k=n \}|
\end{equation}
where
$$
p_{\lambda} = \prod_{i=1}^{l(\lambda)} p_{\lambda_i} \ \ \ \text{and} \ \ \ p_i=\sum_{j=1}^{k} x_{j}^i
$$
The Macdonald polynomials $P_{\mu}(\xx;q,t)$, where $\mu$ is a partition of length at most $k$, are the unique basis of $\mathcal{F}$ defined by the following two properties
$$
\lambda \neq \mu \implies \langle P_{\mu}(\xx;q,t), P_{\lambda}(\xx;q,t)\rangle = 0
$$
$$
P_{\lambda}(\xx;q,t) = \sum_{\mu \leq \lambda} u_{\lambda\mu} m_{\mu}(\xx), \ \ \ u_{\lambda\lambda}=1,  \ \ u_{\lambda,\mu} \in \matQ(q,h)
$$
where $m_\mu(\xx)$ is the monomial symmetric function corresponding to $\mu$
 and
$$
\mu \leq \lambda \iff \mu_1 + \ldots + \mu_i \leq \lambda_1+\ldots +\lambda_i, \ \ \forall i \geq 0
$$

\subsection{Vertex function for $X^{!}$ as a descendant insertion}
As remarked above, $\textbf{V}^!_{p^{!}}(u,\zz)$ is a power series in $\zz$, with coefficients in $\mathbb{C}(q,\hbar^{!},u)$. The only terms involving $u=u_1/u_2$ appear as 
$$
\frac{1-wu}{1-w'u}
$$
for Laurent monomials $w$ and $w'$ in $q$ and $\hbar^{!}$. Hence we can expand each of these as a power series in $u$, which identifies $\textbf{V}^!_{p^{!}}(u,\zz)$ as an element of $\mathbb{C}(q,\hbar^{!})[[\zz,u]]$. 

For $\tau\in K_{\bT_{\lambda}}(X_{\lambda})$, we write the descendant insertion of $\tau$ into the vertex function as $\textbf{V}_{\lambda}(\zz)\langle \tau \rangle$. We have the following 

\begin{Proposition}\label{reducetopoint} Let $\xx=(x_1, \ldots, x_k)$ be the Chern roots of the tautological bundle $\mathcal{V}_{n-k}$ on $X_{\lambda}$. As elements of $\mathbb{C}(q,\hbar^{!})[[\zz,u]]$, we have
\begin{equation}
\textbf{V}^{!}_{p^{!}}(u,\zz) = \prod_{i=1}^k \frac{\varphi(u  {\hbar^{!}}^{i})}{\varphi(u q {\hbar^{!}}^{i-1})} \textbf{V}_{\lambda}(\zz) \left\langle \sum_{d=0}^{\infty} \frac{\left(q/\hbar^{!}\right)_d}{(q)_d} P_{(d)}\left(\xx; q, q/\hbar^{!}\right) (\hbar^{!} u)^d  \right\rangle
\end{equation}
where $u=u_1/u_2$ and $P_{(d)}$ is the Macdonald polynomial for the length one partition $(d)$.
\end{Proposition}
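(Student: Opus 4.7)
The plan is to directly compare the $q$-integral representations of $\textbf{V}^{!}_{p^{!}}(u,\zz)$ and $\textbf{V}_{\lambda}(\zz)$, exploiting the fact that $X^{!}$ and $X_{\lambda}$ arise from the same $A_{n-1}$ quiver with the same dimension vector $\mathsf{v}$, and that $X^{!}$ carries exactly one extra framing, of dimension one, at vertex $n-k$. Consequently the two $q$-integrands differ only by the contribution of this extra framing, and $\tau$ is chosen to absorb this difference as a descendant insertion on the $X_{\lambda}$ side.

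Since $\textbf{V}^{!}_{p^{!}}$ depends only on the ratio $u=u_1/u_2$, I first normalize by setting $u_1=1$, $u_2=1/u$. With this choice, the $\bT$-weights of the tautological bundles at the fixed point $p^{!}$ on $X^{!}$ agree exactly with those at the unique fixed point on $X_{\lambda}$, so the prefactor $f(\xx_p,\zz)^{-1}$ in the $q$-integral formula is the same for the two vertex functions. Writing $\xx=(x_1,\ldots,x_k)$ for the Chern roots of $\mathcal{V}_{n-k}$, the extra framing $\mathrm{Hom}(W_{n-k},\mathcal{V}_{n-k})$ contributes the character $\sum_j x_j u$ to the virtual class $\mathcal{P}(\xx)$. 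Hence the ratio of the $X^{!}$ integrand $\Phi((q-\hbar^{!})\mathcal{P}(\xx))$ to the $X_{\lambda}$ integrand equals
\[
\prod_{j=1}^{k}\frac{\varphi(qx_ju)}{\varphi(\hbar^{!}x_ju)},
\]
while the ratio of the prefactors $\Phi((q-\hbar^{!})\mathcal{P}_p)^{-1}$ equals the inverse of this expression evaluated at the fixed-point characters $x_j=(\hbar^{!})^{j-1}$, namely $\prod_{i=1}^{k}\varphi(u(\hbar^{!})^i)/\varphi(qu(\hbar^{!})^{i-1})$, which is precisely the prefactor appearing in the proposition.

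It remains to identify the integrand ratio with the descendant $\tau(\xx)$. Substituting $t=q/\hbar^{!}$ and $y=\hbar^{!}u$ in the classical identity $\prod_j (tx_jy;q)_{\infty}/(x_jy;q)_{\infty}=\sum_{d\geq 0}g_d(\xx;q,t)\,y^d$, combined with the normalization $g_d=\frac{(t;q)_d}{(q;q)_d}P_{(d)}(\xx;q,t)$ for the one-row Macdonald polynomial, yields
\[
\prod_{j=1}^{k}\frac{\varphi(qx_ju)}{\varphi(\hbar^{!}x_ju)}=\sum_{d=0}^{\infty}\frac{(q/\hbar^{!})_d}{(q)_d}P_{(d)}(\xx;q,q/\hbar^{!})(\hbar^{!}u)^d=\tau(\xx).
\]
Thus $\tau$ is a formal power series in $u$ with coefficients symmetric polynomials in $\xx$, i.e.\ a legitimate descendant class, and assembling the three comparisons above in the $q$-integral formula produces the claimed identity.

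The main obstacle is purely bookkeeping: one must carefully match the equivariant weights on $X^{!}$ and $X_{\lambda}$ at the two corresponding fixed points, verify that under the chosen normalization the $f$-prefactors in fact cancel, and check that no unaccounted-for torus character enters through the tautological bundles at other vertices. Once these are in place and the extra framing factor is recognized as the Macdonald generating series above, the rest of the argument reduces to a substitution in the $q$-integral formula.
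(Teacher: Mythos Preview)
Your proposal is correct and follows essentially the same route as the paper. The paper simply compares the explicit series formulas for $\textbf{V}^{!}_{p^{!}}$ and $\textbf{V}_{\lambda}$ to read off the extra factor $\prod_{i}\varphi(uqx_i)/\varphi(u\hbar^{!}x_i)$ coming from the additional framing at vertex $n-k$, and then applies the same one-row Macdonald generating function identity with the same substitution $t=q/\hbar^{!}$, $y=\hbar^{!}u$; your argument does the identical computation phrased in the language of the $q$-integral representation rather than the summed series, which amounts to the same thing.
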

\begin{proof}
From the explicit form (\ref{vpt}) of $\textbf{V}_{\lambda}(\zz)$, we see that $$
\textbf{V}^{!}_{p^{!}}(u,\zz)= \prod_{i=1}^k \frac{\varphi(u  {\hbar^{!}}^{i})}{\varphi(u q {\hbar^{!}}^{i-1})} \textbf{V}_{\lambda}(\zz)\left\langle \prod_{i=1}^{k} \frac{\varphi(u q x_i)}{\varphi(u \hbar^{!} x_i)} \right\rangle
$$ 
Recall the identity from \cite{mac} Chapter 6 Section 2:
$$
\prod_{i=1}^{k} \frac{\varphi(y t x_i)}{\varphi(y x_i)} = \sum_{d=0}^{\infty} \frac{(t)_d}{(q)_d} P_{(d)}(\xx;q,t) y^d
$$
Substituting $t=q/\hbar^{!}$ and $y=\hbar^{!} u$ gives the result.

\end{proof}

\subsection{A family of difference operators}

\begin{Definition}\label{dop} Let $\xx=(x_1,\ldots,x_k)$ be a set of variables. Define the following difference operator on $\mathbb{C}(q,t,x_1,\ldots,x_k)$
\begin{equation}\label{dopeq}
 D_{d}(\xx;q,t)=\sum_{\substack{d_1+\ldots+d_k=d\\d_i\geq0}} \prod_{i,j=1}^k \frac{(t x_j/x_i)_{d_j}}{(q x_j/x_i)_{d_j}} \frac{(q x_j/x_i)_{d_j-d_i}}{(t x_j/x_i)_{d_j-d_i}} \prod_{i=1}^k  p_i^{d_i}
\end{equation}
where $p_i:x_j \mapsto q^{\delta_{i,j}} x_j$.
\end{Definition}

The motivation for introducing $D_{d}(\xx;q,t)$ comes from the following theorem.
\begin{Theorem}\label{insertion}
Let $\xx=(x_1,\ldots,x_k)$ denote the Chern roots of the tautological bundle $\mathcal{V}_{n-k}$ on $X_{\lambda}$ and let $\aa=(a_{n-k+1},\ldots,a_{n})$. Then
$$
D_d(\aa;q,\hbar) \kappa\left( \textbf{V}_{\lambda}(\zz)\right) = \kappa\left(\textbf{V}_{\lambda}(\zz)\left\langle \frac{\left(q/\hbar^{!}\right)_d}{(q)_d} P_{(d)}\left(\xx;q,q/\hbar^{!}\right) {\hbar^{!}}^{d(1-k)} \right\rangle \right)
$$ 
\end{Theorem}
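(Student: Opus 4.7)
The plan is to equate both sides of the claimed identity explicitly, using the product formula for $\kappa(\textbf{V}_{\lambda}(\zz))$ from Proposition \ref{prform} and the diagonalization of $D_d$ on Macdonald polynomials that is the main result of Section 5 (or equivalently the $q$-Selberg integral identity from Section 5.4).

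The left-hand side is a direct calculation. By Proposition \ref{prform},
$$\kappa(\textbf{V}_{\lambda}(\zz)) = \prod_{j=n-k+1}^{n} \prod_{i=1}^{n-k} \frac{\varphi(qa_j/a_i)}{\varphi(\hbar a_j/a_i)}.$$
Each shift $p_m^{d_m}$ in $D_d(\aa;q,\hbar)$ sends $a_{n-k+m} \mapsto q^{d_m} a_{n-k+m}$, and using $\varphi(q^r x)/\varphi(x) = 1/(x;q)_r$ this multiplies the product by an explicit ratio of $q$-Pochhammer symbols in $\aa$. Summing over compositions $d_1+\cdots+d_k=d$ with the coefficients of Definition \ref{dop}, I can express
$$D_d(\aa;q,\hbar)\kappa(\textbf{V}_{\lambda}(\zz)) = \kappa(\textbf{V}_{\lambda}(\zz)) \cdot S_d(\aa;q,\hbar),$$
where $S_d$ is an explicit finite sum of rational functions in $\aa$, $q$, and $\hbar$.

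For the right-hand side, I would invoke the $q$-integral representation (\ref{qint}) for $\textbf{V}_{\lambda}(\zz)\langle\tau\rangle$ and then apply $\kappa$. Since $\kappa$ sends $\hbar^{!} \mapsto q/\hbar$, the descendant $\frac{(q/\hbar^{!})_d}{(q)_d} P_{(d)}(\xx;q,q/\hbar^{!})(\hbar^{!})^{d(1-k)}$ becomes $\frac{(\hbar)_d}{(q)_d} P_{(d)}(\xx;q,\hbar)(q/\hbar)^{d(1-k)}$. The fixed-point weights of $\mathcal{V}_{n-k}$ on $X_{\lambda}$ are powers of $\hbar^{!}$ and hence powers of $q/\hbar$ after $\kappa$, matching exactly the substitution point $q^{\mu_i}(t/q)^{i-1}$ appearing in the Macdonald diagonalization formula with $t = \hbar$ and $\mu = (d)$; the prefactor $(q/\hbar)^{d(1-k)}$ precisely compensates for the homogeneity scaling of $P_{(d)}$ of degree $d$. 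The $q$-Selberg reformulation of the diagonalization from Section 5.4 then evaluates the descendant integral as $\kappa(\textbf{V}_{\lambda}(\zz))$ times exactly the same sum $S_d(\aa;q,\hbar)$, producing the claimed equality.

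I expect the main obstacle to be the careful reconciliation of normalizations: the prefactor $(q/\hbar)^{d(1-k)}$ must match exactly the scaling produced by identifying the Chern roots of $\mathcal{V}_{n-k}$ at the fixed point with the $\aa$ variables under $\kappa$, and one must also verify that after summing over the Chern roots of the other tautological bundles $\mathcal{V}_i$ for $i \neq n-k$, the residual integrand has exactly the Selberg-kernel form needed to invoke the identity from Section 5.4. This is where the careful choice of the descendant $\tau$ in the statement of the theorem, and in particular the exponent $d(1-k)$, is forced. Once this bookkeeping is carried out, the two sides coincide term by term.
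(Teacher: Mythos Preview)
Your computation of the left-hand side via Proposition \ref{prform} is fine and is in fact how the paper establishes Proposition \ref{vgrcoeff}. The treatment of the right-hand side, however, has a genuine gap, and the paper's argument proceeds rather differently.

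The core problem is the step you flag only as bookkeeping: ``after summing over the Chern roots of the other tautological bundles $\mathcal{V}_i$ for $i \neq n-k$, the residual integrand has exactly the Selberg-kernel form.'' This is not bookkeeping; it is the substance of the proof. The $q$-Selberg identity of Section 5.4 is a statement about a $k$-variable integral with a Macdonald polynomial $P_\mu$ in the integrand, whereas the descendant vertex function $\textbf{V}_{\lambda}(\zz)\langle\tau\rangle$ is a sum over the full collection of degree variables $d_{i,j}$ for $1 \le i \le n-1$, $1\le j \le \mathsf{v}_i$. There is no a priori reason for the partial sum over the $i \neq n-k$ variables to produce a Macdonald polynomial (or a Selberg kernel) in the remaining variables. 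That this actually happens is a nontrivial input which the paper imports from \cite{dinksmir3} and \cite{tRSKor}: Proposition \ref{vertexsum} identifies the partial sum over the $i>n-k$ variables with the vertex function of a full flag variety, and Lemma \ref{flag} and (\ref{spec}) then identify that, after $\kappa$, with $P_{\mu}(\aa;q,q/\hbar)$ via the Noumi--Shiraishi function $p(\xx;\ss;q,t)$. This is what produces Lemma \ref{sumform}, the Macdonald expansion of $\kappa(\textbf{V}_\lambda(\zz))$ on which the diagonalization theorem can then act. Your proposal omits this mechanism entirely.

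There is also a conceptual slip: you write that the Chern-root substitution matches ``$q^{\mu_i}(t/q)^{i-1}$ \ldots with $t = \hbar$ and $\mu = (d)$.'' But $\mu$ in Theorem \ref{diagonal} labels the Macdonald eigenfunction, not the one-row partition $(d)$ labeling the operator $D_d$. In the paper's proof, $\mu$ is the partition $(d_{n-k,k},\ldots,d_{n-k,1})$ extracted from the summation variables, and the point is that the descendant $P_{(d)}$ evaluated at the shifted Chern roots $q^{d_{n-k,j}}(\hbar^!)^{j-1}$ becomes, after $\kappa$ and the homogeneity rescaling by $(q/\hbar)^{d(1-k)}$, precisely the eigenvalue $\frac{(\hbar)_d}{(q)_d}P_{(d)}\bigl(q^{\mu_i}(\hbar/q)^{i-1};q,\hbar\bigr)$ of $D_d$ acting on $P_\mu$. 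Without first exhibiting a $P_\mu$ inside $\kappa(\textbf{V}_\lambda)$ for $D_d$ to act on, the diagonalization theorem cannot be applied in either its operator form or its Selberg-integral form.
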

This will be proven below. 
\begin{Note}
Theorem \ref{insertion} is similar to the main result of \cite{dinksmir3}. There, we studied descendant insertions into $\textbf{V}_{\lambda}(\zz)$ in the basis of elementary symmetric functions in the Chern roots of the tautological bundles on $X_{\lambda}$. In that case, the techniques are similar, and instead of $D_{d}(\aa;q,\hbar)$, the classical Macdonald difference operators appear.
\end{Note}

\subsection{Spectrum of $D_{d}(\xx;q,t)$}

\begin{Theorem}\label{diagonal}
The operators $D_{d}(\xx;q,t)$ for $d\in \matN$ are pairwise commutative. In addition, they act diagonally in the basis of Macdonald polynomials $P_{\mu}(\xx;q,q/t)$ as
\begin{equation}
    D_d(\xx;q,t) P_{\mu}(\xx;q,q/t)=  \frac{(t)_d}{(q)_d} P_{d}(q^{\mu_i}(t/q)^{i-1};q,t) P_{\mu}(\xx;q,q/t)
\end{equation}

\end{Theorem}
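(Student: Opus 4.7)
The overall strategy is to reformulate the claim as a single generating-function identity and reduce it to known Macdonald theory. Introduce
$$\mathcal{D}(\xx; y; q, t) := \sum_{d \geq 0} D_d(\xx; q, t)\, y^d.$$
Using the Macdonald identity $\prod_{i=1}^k \varphi(tyx_i)/\varphi(yx_i) = \sum_d \frac{(t)_d}{(q)_d} P_d(\xx; q, t)\, y^d$ recalled in the proof of Proposition \ref{reducetopoint}, the eigenvalue assertion of Theorem \ref{diagonal} is equivalent to
$$\mathcal{D}(\xx; y; q, t)\, P_\mu(\xx; q, q/t) = \prod_{i=1}^k \frac{\varphi\bigl(ty\, q^{\mu_i}(t/q)^{i-1}\bigr)}{\varphi\bigl(y\, q^{\mu_i}(t/q)^{i-1}\bigr)}\, P_\mu(\xx; q, q/t).$$
Pairwise commutativity of the $D_d$ would then be automatic from diagonalizability in a common eigenbasis with generically distinct eigenvalues, so everything reduces to establishing the displayed generating-function identity.

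To prove this identity, I would proceed via a Macdonald--Cauchy kernel argument. Consider an auxiliary set of variables $\zz = (z_1, \ldots, z_N)$ and the kernel
$$K(\xx, \zz) := \prod_{i=1}^k \prod_{j=1}^N \frac{\varphi((q/t) x_i z_j)}{\varphi(x_i z_j)} = \sum_{\lambda} b_\lambda\, P_\lambda(\xx; q, q/t)\, P_\lambda(\zz; q, q/t),$$
with $b_\lambda$ the standard Macdonald normalization constants. The plan is to compute $\mathcal{D}(\xx; y) K(\xx, \zz)$ by passing the $q$-shifts through the kernel, rewriting it as $\mathcal{D}^{\vee}(\zz; y) K(\xx, \zz)$ for a ``dual'' operator $\mathcal{D}^\vee$ acting on the $\zz$-side whose eigenvalues on each $P_\lambda(\zz; q, q/t)$ are manifestly the claimed product. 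By orthogonality of Macdonald polynomials, matching coefficients in the $P_\lambda \otimes P_\lambda$ expansion would then force $\mathcal{D}(\xx; y)$ to be diagonal in the $\xx$-Macdonald basis with the read-off eigenvalue. A second, possibly cleaner route is to identify $\mathcal{D}(\xx; y; q, t)$ with a generating function for the Noumi--Sano operators (after the involutive substitution $t \leftrightarrow q/t$), since the paper flags $D_d$ as closely related to those of \cite{NoumiSano, NSmac}, and then invoke the diagonalization theorem from those works directly.

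Once diagonalization is established, the eigenvalue is pinned down by a convenient specialization. Evaluating at the shifted principal specialization $x_i = (t/q)^{i-1}$, where $P_\mu$ admits an explicit product formula and Pochhammer ratios simplify, reduces the operator identity to a scalar identity that can be checked coefficient by coefficient in $y^d$, recovering the factor $\frac{(t)_d}{(q)_d} P_d(q^{\mu_i}(t/q)^{i-1}; q, t)$. The main obstacle is precisely the diagonalization step: $D_d$ is a dense sum over compositions of $d$, and it is not a priori obvious that the Pochhammer ratios conspire to preserve the Macdonald eigenlines for parameter $q/t$ while the operator itself uses parameter $t$. I expect the decisive input will be the explicit identification of $\mathcal{D}(\xx; y; q, t)$ with a Noumi--Sano-type operator; the kernel-function computation is the natural fallback, but its main delicacy lies in tracking the pole cancellations when multiple $q$-shifts collide in the factors $(qx_j/x_i)_{d_j - d_i}^{-1}$.
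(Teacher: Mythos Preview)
Your generating-function reformulation is exactly what the paper does (Theorem \ref{diagonal2}), and your instinct that the Noumi--Sano operators are the key input is correct. However, your description of the link as ``the involutive substitution $t \leftrightarrow q/t$'' is where the proposal goes off the rails: $D_d(\xx;q,t)$ is \emph{not} $N_d(\xx;q,q/t)$, nor any simple reparametrisation. The actual relationship is a conjugation:
\[
\Delta(\xx)^{-1}\, x^{\mu}\, e(\xx;\ss)^{-1}\, N_d(\xx;q,t)\, e(\xx;\ss)\, x^{-\mu}\, \Delta(\xx) \;=\; (q/t)^{d(k-1)}\, D_d(\xx;q,t),
\]
where $\Delta(\xx)=\prod_{i<j}\varphi(tx_j/x_i)/\varphi(qx_j/tx_i)$ and $e(\xx;\ss)$ is an explicit ratio of theta functions chosen so that $p_i\, e = s_i t^{i-k}\, e$. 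The parameter mismatch between $P_\mu(\xx;q,t)$ (on which $N_d$ acts) and $P_\mu(\xx;q,q/t)$ (on which $D_d$ acts) is bridged not by swapping parameters in the operator, but by the duality identity $\psi(\xx;\ss;q,t)=\psi(\xx;\ss;q,q/t)$ from \cite{NSmac}, which gives $p(\xx;\ss;q,t)=\Delta(\xx)\, p(\xx;\ss;q,q/t)$. Combining this with the conjugation and specialising $\ss=q^{\mu}(q/t)^{\delta}$ transports the known eigenvalue formula for $N(z)$ on $f(\xx;\ss;q,t)$ directly to the claimed formula for $D(z)$ on $P_\mu(\xx;q,q/t)$. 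No separate principal-specialisation check is needed: the eigenvalue is inherited wholesale.

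Your Cauchy-kernel route is a genuinely different idea and might be made to work, but you have not said what the dual operator $\mathcal{D}^\vee$ is or why its eigenvalues are ``manifestly'' the claimed product; that is the whole content of the theorem, so this is circular as stated. The paper's conjugation argument is both shorter and avoids the pole-cancellation delicacies you correctly flag.
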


\begin{Lemma}
\begin{align*}
    &D_d(\xx;q,t) \\
=&\sum_{d_1+\ldots+d_k=d} \prod_{i,j=1}^{k} \frac{\left(t x_j/x_i\right)_{d_j}}{\left(q x_j/x_i\right)_{d_j}} \prod_{1\leq i < j \leq k} \frac{1-q^{d_j-d_i}x_j/x_i}{1-x_j/x_i} \frac{(qx_j/tx_i)_{d_j-d_i}}{(tx_j/x_i)_{d_j-d_i}} \left(\frac{t}{q} \right)^{d_j-d_i}\prod_{i=1}^k p_i^{d_i}
\end{align*}
\end{Lemma}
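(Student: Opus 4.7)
The plan is to verify the identity factor-by-factor, since both the outer sum over compositions $d_1+\cdots+d_k=d$ and the shift operator $\prod_i p_i^{d_i}$ are identical on the two sides, as is the prefactor $\prod_{i,j=1}^k(tx_j/x_i)_{d_j}/(qx_j/x_i)_{d_j}$. So the whole claim reduces to the symmetric-function identity
\begin{equation*}
\prod_{i,j=1}^k \frac{(qx_j/x_i)_{d_j-d_i}}{(tx_j/x_i)_{d_j-d_i}}
=\prod_{1\le i<j\le k} \frac{1-q^{d_j-d_i}x_j/x_i}{1-x_j/x_i}\,\frac{(qx_j/(tx_i))_{d_j-d_i}}{(tx_j/x_i)_{d_j-d_i}}\left(\frac{t}{q}\right)^{d_j-d_i}.
\end{equation*}
The diagonal terms $i=j$ on the left are trivial because $(a)_0=1$, so I group the remaining factors into unordered pairs $\{i,j\}$ with $i<j$; each such pair contributes the product of the $(i,j)$ and $(j,i)$ factors.

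The key computational step is to handle the negative-index Pochhammer symbol in one factor of each pair. By the standard reflection identity $(a;q)_{-n}=(-q/a)^n q^{n(n-1)/2}/(q/a;q)_n$, applied (with $n:=d_j-d_i\ge 0$, the case $n<0$ being symmetric under $i\leftrightarrow j$) to both $(qx_i/x_j)_{-n}$ and $(tx_i/x_j)_{-n}$, the prefactors $(-q/a)^n q^{n(n-1)/2}$ in numerator and denominator cancel except for the ratio of the $a$'s, leaving
\begin{equation*}
\frac{(qx_i/x_j)_{-n}}{(tx_i/x_j)_{-n}}
=\left(\frac{t}{q}\right)^{n}\frac{(qx_j/(tx_i))_{n}}{(x_j/x_i)_{n}}.
\end{equation*}
This is the place where the $(t/q)^{d_j-d_i}$ factor in the stated formula is produced.

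Multiplying by the remaining factor $(qx_j/x_i)_n/(tx_j/x_i)_n$ from the $(i,j)$ slot and then using the elementary telescoping $(qa;q)_n/(a;q)_n=(1-q^n a)/(1-a)$ with $a=x_j/x_i$ converts $(qx_j/x_i)_n/(x_j/x_i)_n$ into $(1-q^{d_j-d_i}x_j/x_i)/(1-x_j/x_i)$, while $(qx_j/(tx_i))_n$ and $(tx_j/x_i)_n$ survive as the remaining ratio on the right-hand side. Assembling the pieces over all pairs $i<j$ yields the lemma. I do not anticipate any real obstacle here; the only care required is bookkeeping of the signs and half-integer powers of $q$ coming from the two applications of the reflection formula, which cancel cleanly because numerator and denominator carry the same index $n$.
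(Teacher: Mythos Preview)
Your proposal is correct and follows exactly the approach the paper itself indicates: the paper's proof consists of the single sentence ``This follows by elementary manipulations with the $q$-Pochhammer symbols in (\ref{dopeq}),'' and what you have written is precisely those elementary manipulations spelled out. The key steps---dropping the diagonal terms, applying the reflection identity $(a;q)_{-n}=(-q/a)^n q^{n(n-1)/2}/(q/a;q)_n$ to the $(j,i)$ factor in each pair, and then telescoping $(qa;q)_n/(a;q)_n=(1-q^n a)/(1-a)$---are the right ones and produce the stated formula. One small remark: your parenthetical ``the case $n<0$ being symmetric under $i\leftrightarrow j$'' is a slight shortcut, since the right-hand side is written only for $i<j$; but the same Pochhammer identities hold for all integer indices, so the formula for the pair contribution is valid regardless of the sign of $d_j-d_i$, and nothing needs to be added.
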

\begin{proof}
This follows by elementary manipulations with the $q$-Pochammer symbols in (\ref{dopeq}).
\end{proof}

Define the generating function 
$$
D(z)=\sum_{d=0}^{\infty} z^d D_d(\xx;q,t)
$$
Then 
\begin{Theorem}\label{diagonal2}
Theorem \ref{diagonal} is equivalent to 
\begin{equation}
    D(z) P_{\mu}(\xx;q,q/t)= P_{\mu}(\xx;q,q/t) \prod_{i=1}^{k} \frac{\varphi(t q^{\mu_i} (t/q)^{i-1} z)}{\varphi(q^{\mu_i} (t/q)^{i-1} z)}
\end{equation}
\end{Theorem}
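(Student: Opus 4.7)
The plan is to recognize Theorem \ref{diagonal2} as the generating-function repackaging of Theorem \ref{diagonal}, with the translation between the two mediated by the Macdonald principal-specialization identity already invoked in the proof of Proposition \ref{reducetopoint}.

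Assuming Theorem \ref{diagonal}, I would multiply both sides by $z^d$ and sum over $d \geq 0$. By definition of $D(z)=\sum_{d\geq 0} z^d D_d(\xx;q,t)$, the left-hand side becomes $D(z) P_\mu(\xx; q, q/t)$, while on the right the factor $P_\mu(\xx; q, q/t)$ is independent of $d$ and can be pulled out, yielding
$$
D(z) P_\mu(\xx; q, q/t) = P_\mu(\xx; q, q/t) \sum_{d=0}^\infty \frac{(t)_d}{(q)_d} P_{(d)}\bigl(q^{\mu_i}(t/q)^{i-1}; q, t\bigr) z^d.
$$
I would then apply the identity
$$
\sum_{d=0}^\infty \frac{(t)_d}{(q)_d} P_{(d)}(y_1,\ldots,y_k; q, t)\, y^d = \prod_{i=1}^k \frac{\varphi(t\, y\, y_i)}{\varphi(y\, y_i)}
$$
from \cite{mac} Chapter 6 Section 2, which was already used in the proof of Proposition \ref{reducetopoint}. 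Specializing $y_i = q^{\mu_i}(t/q)^{i-1}$ and $y = z$ converts the remaining sum into
$$
\prod_{i=1}^k \frac{\varphi(t q^{\mu_i}(t/q)^{i-1} z)}{\varphi(q^{\mu_i}(t/q)^{i-1} z)},
$$
exactly matching the eigenvalue appearing in Theorem \ref{diagonal2}.

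For the converse implication, I would expand both sides of Theorem \ref{diagonal2} as formal power series in $z$ and read off the coefficient of $z^d$. On the left the coefficient is $D_d(\xx; q, t) P_\mu(\xx; q, q/t)$ by the definition of $D(z)$; on the right, the same Macdonald identity used in reverse extracts $\frac{(t)_d}{(q)_d} P_{(d)}(q^{\mu_i}(t/q)^{i-1}; q, t) P_\mu(\xx; q, q/t)$, recovering Theorem \ref{diagonal} term by term. There is essentially no obstacle: the two statements are connected by a purely formal generating-function transform, and the Macdonald identity supplies the closed form of the sum. The only point requiring a sentence of care is noting that everything makes sense as identities in $\mathbb{C}(q,t)[[z]]\otimes \mathcal{F}$, so that the coefficient extraction in both directions is legitimate.
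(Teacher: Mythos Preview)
Your proposal is correct and is essentially the same as the paper's proof, which simply says to expand the product on the right-hand side as a power series in $z$ and equate coefficients. You have made explicit the one nontrivial point the paper leaves implicit, namely that the expansion of $\prod_{i=1}^{k} \frac{\varphi(t q^{\mu_i}(t/q)^{i-1} z)}{\varphi(q^{\mu_i}(t/q)^{i-1} z)}$ in powers of $z$ is governed by the Macdonald identity $\sum_{d\ge 0}\frac{(t)_d}{(q)_d}P_{(d)}(\xx;q,t)y^d=\prod_i \frac{\varphi(t y x_i)}{\varphi(y x_i)}$ already used in Proposition \ref{reducetopoint}.
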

\begin{proof}
This following by expanding the product in the right hand side of the Theorem as a power series in $z$ and equating the coefficients of $z$.
\end{proof}

Let $p(\xx;\ss;q,t)$ be the function defined in formula (1.11) of \cite{NSmac}. It depends on two sets of variables $\xx=(x_1,\ldots,x_k)$ and $\ss=(s_1,\ldots,s_k)$. It has the property that
\begin{equation}\label{spec}
x^{\mu} p(\xx;q^{\mu} t^{\delta};q,t)=P_{\mu}(\xx;q,t)
\end{equation}
where $\delta=(k-1,\ldots,0)$ and $q^{\mu} t^{\delta}$ stands for the specialization $s_i=q^{\mu_i} t^{\delta_i}$. Let
\begin{align*}
    \psi(\xx;\ss;q,t) &= \prod_{1\leq i < j \leq k} \frac{\varphi\left(qx_j/tx_i\right)}{\varphi\left(qx_j/x_i\right)} p(\xx;\ss;q,t)
\end{align*}
and
$$
e(\xx;\ss)= \prod_{i=1}^{k} \frac{\vartheta(x_i t^{k-i}) \vartheta(s_i t^{k-1})}{\vartheta(s_i x_i)}
$$
where $\vartheta(x)=\varphi(x)\varphi(q/x)\varphi(q)$. 

The relevant property of $e(\xx;\ss)$ is given by the following:
\begin{Lemma}
The function $e(\xx;\ss)$ transforms under the action of $p_i$ as:
$$
p_ie(\xx;\ss)=e(\xx;\ss) s_i t^{i-k}
$$
\end{Lemma}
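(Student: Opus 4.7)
The plan is to reduce the claim to the quasi-periodicity of the theta function $\vartheta(x)=\varphi(x)\varphi(q/x)\varphi(q)$ under the shift $x\mapsto qx$. Since $p_i$ sends $x_i\mapsto qx_i$ and fixes all other $x_j$ as well as every $s_j$, the only factors in the product defining $e(\xx;\ss)$ that are modified are the one indexed by $i$ in the numerator, $\vartheta(x_i t^{k-i})$, and the one indexed by $i$ in the denominator, $\vartheta(s_i x_i)$. All remaining factors cancel in the ratio $p_i e(\xx;\ss)/e(\xx;\ss)$, so the problem reduces to computing
$$
\frac{p_i e(\xx;\ss)}{e(\xx;\ss)} = \frac{\vartheta(qx_i t^{k-i})}{\vartheta(x_i t^{k-i})} \cdot \frac{\vartheta(s_i x_i)}{\vartheta(qs_i x_i)}.
$$

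Next I would establish the one-step functional equation $\vartheta(qx) = -x^{-1}\vartheta(x)$. This comes from the two obvious identities satisfied by $\varphi$, namely $\varphi(qx)=\varphi(x)/(1-x)$ (peeling off the first factor of the infinite product) and $\varphi(1/x) = (1-1/x)\varphi(q/x) = -(1-x)x^{-1}\varphi(q/x)$. Substituting into the definition of $\vartheta(qx)=\varphi(qx)\varphi(1/x)\varphi(q)$ and cancelling the factor $1-x$ yields $\vartheta(qx)=-x^{-1}\vartheta(x)$, as claimed.

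Applying this transformation separately to $x=x_i t^{k-i}$ and $x=s_i x_i$, the two minus signs cancel and we obtain
$$
\frac{p_i e(\xx;\ss)}{e(\xx;\ss)} = \frac{-(x_i t^{k-i})^{-1}}{-(s_i x_i)^{-1}} = \frac{s_i x_i}{x_i t^{k-i}} = s_i t^{i-k},
$$
which is exactly the stated identity. There is no real obstacle here: once the quasi-periodicity of $\vartheta$ is in hand, the proof is a single line of bookkeeping, and the small care required is only to make sure that when computing $p_i e/e$ one tracks exactly which factors depend on $x_i$ (the numerator factor $\vartheta(s_i t^{k-1})$, which involves only $s_i$, does not contribute, nor does any $\vartheta(s_j x_j)$ or $\vartheta(x_j t^{k-j})$ for $j\neq i$).
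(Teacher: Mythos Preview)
Your proof is correct and is exactly the direct computation the paper alludes to: the paper's own proof consists of the single line ``This follows from direct computation,'' and what you have written is precisely that computation spelled out via the quasi-periodicity $\vartheta(qx)=-x^{-1}\vartheta(x)$.
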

\begin{proof}
This follows from direct computation.
\end{proof}

\begin{Lemma}[\cite{NSmac} Theorem 1.4]
The function $\psi(\xx;\ss;q,t)$ satisfies the following identity:
$$
\psi(\xx;\ss;q,t)=\psi(\xx;\ss;q,q/t)
$$
\end{Lemma}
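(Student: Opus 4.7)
The plan is to derive the identity directly from the explicit multi-series representation of $p(\xx;\ss;q,t)$ supplied in formula (1.11) of NSmac. In that paper, $p(\xx;\ss;q,t)$ is expressed as an infinite sum indexed by strictly upper-triangular matrices $(n_{ij})_{i<j}$ of nonnegative integers, with coefficients built out of $q$-Pochhammer symbols in the arguments $q$, $t$, the ratios $x_i/x_j$, and the spectral variables $s_i$. After writing both $\psi(\xx;\ss;q,t)$ and $\psi(\xx;\ss;q,q/t)$ as explicit multiple sums, the task reduces to matching the summands term-by-term, with the prefactor $\prod_{i<j}\varphi(qx_j/tx_i)/\varphi(qx_j/x_i)$ playing the role of the normalization that makes the $t\leftrightarrow q/t$ symmetry manifest.

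First I would apply the substitution $t \mapsto q/t$ to the explicit series for $p$ and, separately, to the prefactor, then attempt to cancel the resulting $t$-dependence pair-by-pair. For each $(i,j)$ with $i<j$ and each summation index $n=n_{ij}$, the comparison reduces to an identity of the schematic form
\begin{equation*}
\frac{(tx_j/x_i)_{\bullet}}{(qx_j/tx_i)_{\bullet}}\;\Big(\text{prefactor contribution at }t\Big)
\;=\;
\frac{(qx_j/x_i \cdot t/q)_{\bullet}}{(qx_j/x_i \cdot q/t \cdot 1/q)_{\bullet}}\;\Big(\text{prefactor contribution at }q/t\Big),
\end{equation*}
which in turn follows from the reflection rule $(a)_n = (-a)^n q^{\binom{n}{2}}(q^{1-n}/a)_n$ together with the telescoping identity $\varphi(a) = (a)_N \varphi(a q^N)$ that splits each infinite product into a finite Pochhammer piece (absorbed into the series) and a residual infinite piece. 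The prefactor is designed so that the residual infinite pieces combine into something manifestly invariant under $t\leftrightarrow q/t$.

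The main obstacle is combinatorial bookkeeping: the substitution $t \mapsto q/t$ acts simultaneously on every Pochhammer factor in the multi-series, and one must verify that the global prefactor absorbs the accumulated $q$- and $t$-powers coming from all pairs $(i,j)$ at once. An alternative cleaner route, which I would also attempt, is to characterize $\psi(\xx;\ss;q,t)$ as the unique solution to a system of bispectral $q$-difference equations in $\xx$ and $\ss$ that is itself invariant under $t\leftrightarrow q/t$ (such a characterization is implicit in the Noumi--Shiraishi construction via the specialization $x^{\mu}p(\xx;q^{\mu}t^{\delta};q,t)=P_{\mu}(\xx;q,t)$), so that the identity becomes a tautology at the level of the characterizing equations. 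Since the statement is asserted as \cite{NSmac}~Theorem~1.4, the complete verification is already available in the literature; the sketch above explains the mechanism by which the prefactor $\prod_{i<j}\varphi(qx_j/tx_i)/\varphi(qx_j/x_i)$ is precisely the right normalization to realize the hidden $t\leftrightarrow q/t$ symmetry of $p$.
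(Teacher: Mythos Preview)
The paper does not prove this lemma at all: it is stated with the attribution ``\cite{NSmac} Theorem 1.4'' and no argument is given. So there is nothing in the paper to compare your proposal against; the paper simply imports the result from the literature.

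Your proposal is therefore more ambitious than the paper's treatment, but it is a sketch rather than a proof. The first route you describe (term-by-term matching of the explicit Noumi--Shiraishi series after the substitution $t\mapsto q/t$) is the natural thing to try, and the prefactor $\prod_{i<j}\varphi(qx_j/tx_i)/\varphi(qx_j/x_i)$ is indeed the correction that makes the symmetry visible; however, you stop short of actually carrying out the bookkeeping, and the schematic identity you display does not pin down which Pochhammer factors cancel against which. The second route (invoking a bispectral uniqueness characterization invariant under $t\leftrightarrow q/t$) is closer in spirit to how such identities are typically established in the Noumi--Shiraishi framework, but again you only gesture at it. Since you conclude by deferring to \cite{NSmac} anyway, your proposal and the paper end up in the same place: both rely on the cited reference for the actual proof.
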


We also consider the normalized function:
$$
f(\xx;\ss;q,t)=e(\xx,\ss) p(\xx;\ss;q,t)
$$

Recall from \cite{NoumiSano} and \cite{NSmac}, the family of $q$-difference operators
$$
N_{d}(\xx;q,t)= \sum_{d_1+\ldots+d_k=d} \prod_{i,j=1}^{k} \frac{\left(t x_j/x_i\right)_{d_j}}{\left(q x_j/x_i\right)_{d_j}} \prod_{1\leq i < j \leq k} \frac{q^{d_j}x_j-q^{d_i}x_i}{x_j-x_i} \prod_{i=1}^k p_i^{d_i}
$$
These operators are known as Noumi's $q$-difference operators, or the Macdonald operators of row type. Define the generating function
\begin{align*}
    N(z) &=\sum_{d=0}^{\infty} z^d N_{d}(\xx;q,t)
\end{align*}

It is known that
\begin{Proposition}[\cite{NSmac} Formula (5.7)]\label{Noumi}
$$
N(z)f(\xx;\ss;q,t) = f(\xx;\ss;q,t) \prod_{i=1}^k \frac{\varphi(t s_i z)}{\varphi(s_i z)}
$$
\end{Proposition}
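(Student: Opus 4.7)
The plan is to reduce the identity to the classical diagonalization of $N(z)$ on Macdonald polynomials, via a gauge transformation by $e(\xx;\ss)$ followed by specialization of the spectral variable $\ss$ to $q^{\mu} t^{\delta}$.

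First, from the preceding lemma one has the operator identity $p_i \cdot e(\xx;\ss) = e(\xx;\ss) \cdot s_i t^{i-k} \cdot p_i$ on functions of $\xx$. The coefficients of $N(z)$ are rational in $\xx$ alone and hence commute (as multiplication operators) with $e(\xx;\ss)$, so conjugation yields an operator
\begin{equation*}
\widetilde{N}(z) := e(\xx;\ss)^{-1}\, N(z)\, e(\xx;\ss)
\end{equation*}
obtained from $N(z)$ by the replacement $p_i \mapsto s_i t^{i-k} p_i$. Because $f = e \cdot p$, the proposition is equivalent to
\begin{equation*}
\widetilde{N}(z)\, p(\xx;\ss;q,t) = p(\xx;\ss;q,t) \prod_{i=1}^{k} \frac{\varphi(t s_i z)}{\varphi(s_i z)}.
\end{equation*}

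Next, I would verify this conjugated identity at each specialization $\ss = q^{\mu} t^{\delta}$ with $\mu$ a partition of length at most $k$. On this locus $s_i t^{i-k} = q^{\mu_i}$, and using that $p_i$ shifts $x_i$ by $q$ and fixes the other $x_j$, one checks that for any $g(\xx)$,
\begin{equation*}
\widetilde{N}(z)\bigl(x^{-\mu}\, g(\xx)\bigr) = x^{-\mu}\, N(z)\, g(\xx),
\end{equation*}
because the scalars $\prod_i q^{\mu_i d_i}$ introduced by the replacement $p_i \mapsto q^{\mu_i} p_i$ are cancelled precisely by the scalars $\prod_i q^{-\mu_i d_i}$ produced when $p_1^{d_1}\cdots p_k^{d_k}$ acts through $x^{-\mu}$. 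Combined with the interpolation property (\ref{spec}) in the form $p(\xx;q^{\mu} t^{\delta};q,t) = x^{-\mu} P_{\mu}(\xx;q,t)$ and the classical eigenvalue formula $N(z)\, P_{\mu}(\xx;q,t) = P_{\mu}(\xx;q,t) \prod_i \varphi(t q^{\mu_i} t^{k-i} z)/\varphi(q^{\mu_i} t^{k-i} z)$ for Noumi's row-type operators, this yields the desired identity whenever $\ss = q^{\mu} t^{\delta}$.

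To finish, both sides of the identity depend on $\ss$ through coefficients that are rational functions of $s_1, \ldots, s_k$ (as follows from the explicit construction of $p(\xx;\ss;q,t)$ in \cite{NSmac}, formula (1.11)), while the specializations $\{q^{\mu} t^{\delta} : \ell(\mu) \le k\}$ form a Zariski-dense subset of the $\ss$-parameter space. Agreement on this dense set forces equality as rational functions of $\ss$. The main obstacle is this last step: it requires precise control over the functional class of $p(\xx;\ss;q,t)$ to justify the density argument, whereas the gauge reduction and specialization steps themselves are short and formal.
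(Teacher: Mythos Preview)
The paper does not prove this proposition; it simply imports it as formula (5.7) of \cite{NSmac}. Your argument, by contrast, supplies an actual derivation, and the reduction is sound: conjugation by $e(\xx;\ss)$ replaces each $p_i$ by $s_i t^{i-k} p_i$, which at $\ss=q^\mu t^\delta$ becomes $q^{\mu_i}p_i$, and the resulting factor $\prod_i q^{\mu_i d_i}$ is precisely cancelled when $\prod_i p_i^{d_i}$ passes through $x^{-\mu}$. This collapses the statement at each specialization to the row-type eigenvalue formula $N(z)P_\mu = P_\mu\prod_i\varphi(t\,q^{\mu_i}t^{k-i}z)/\varphi(q^{\mu_i}t^{k-i}z)$, which is established independently in \cite{NoumiSano}, so there is no circularity.

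The density step you flag as the obstacle is the only point requiring care, and it can be handled using the explicit structure of $p(\xx;\ss;q,t)$ from \cite{NSmac}: formula (1.11) there presents $p$ as a formal power series in the ratios $x_j/x_i$ (for $i<j$) whose coefficients are \emph{rational} functions of $s_1,\ldots,s_k$. The conjugated identity $\widetilde N(z)\,p = p\,\prod_i\varphi(ts_iz)/\varphi(s_iz)$ may therefore be compared coefficient by coefficient in that expansion (and degree by degree in $z$); each such comparison is between rational functions of $\ss$ that agree on the Zariski-dense set $\{q^\mu t^\delta:\ell(\mu)\le k\}$, hence identically. So your argument goes through, and it is in fact the natural route from the Macdonald eigenvalue property of $N(z)$ to the bispectral formula~(5.7); the paper simply chose to quote the endpoint rather than rederive it.
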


\begin{proof}[Proof of Theorem \ref{diagonal2}]
Using the definitions and lemmas given above, we first  calculate
\begin{align*}
    p(\xx;\ss;q,t) &=\prod_{1\leq i<j\leq k} \frac{\varphi(tx_j/x_i)}{\varphi(qx_j/tx_i)} \prod_{1\leq i<j\leq k} \frac{\varphi(qx_j/x_i)}{\varphi(tx_j/x_i)} \psi(\xx;\ss;q,q/t) \\
    &= \Delta(\xx) p(\xx;\ss;q,q/t)
\end{align*}
where 
$$
\Delta(\xx)=\prod_{1\leq i < j \leq k} \frac{\varphi(tx_j/x_i)}{\varphi(qx_j/tx_i)}
$$
Then
\begin{align}\label{ftop}
   f(\xx;\ss;q,t) = \frac{e(\xx;\ss)}{x^{\mu}} \Delta(\xx) x^{\mu} p(\xx;\ss;q,q/t)
\end{align}
where $x^{\mu}=\prod_{i=1}^{k} x_i^{\mu_i}$. Substituting (\ref{ftop}), specializing $\ss=q^{\mu} \left(q/t\right)^{\delta}$, and replacing $z$ by $z (t/q)^{k-1}$ transforms Proposition \ref{Noumi} to 

\begin{align}\label{eq} \nonumber
    N\left(z \left(t/q\right)^{k-1} \right) & e(\xx;\ss) x^{-\mu} \Delta(\xx) P_{\mu}(\xx;q,q/t) \\ &= e(\xx;\ss) x^{-\mu} \Delta(\xx) P_{\mu}(\xx;q,q/t) \prod_{i=1}^{k} \frac{\varphi(t q^{\mu_i} (t/q)^{i-1} z)}{\varphi( q^{\mu_i} (t/q)^{i-1} z)}
\end{align}
where we have suppressed writing the specialization of $\ss$, which we will continue to do below. Rewriting this gives
\begin{multline*}
    \Delta(\xx)^{-1} x^{\mu} e(\xx;\ss)^{-1}  N\left(z (t/q)^{k-1}\right) e(\xx;\ss)x^{-\mu} \Delta(\xx) P_{\mu}(\xx;q,q/t) \\
    = P_{\mu}(\xx,q,q/t) \prod_{i=1}^{k} \frac{\varphi(t q^{\mu_i} (t/q)^{i-1} z)}{\varphi( q^{\mu_i} (t/q)^{i-1} z)}
\end{multline*}
Using the transformation property of $e(\xx;\ss)$ and $x^{\mu}$, we calculate
\begin{align*}
  x^{\mu} e(\xx;\ss)^{-1} & N_{d}(\xx;q,t) e(\xx;\ss) x^{-\mu} \\
  &= \sum_{d_1+\ldots+d_k=d} \prod_{i,j=1}^{k} \frac{\left(t x_j/x_i\right)_{d_j}}{\left(q x_j/x_i\right)_{d_j}} \prod_{1\leq i < j \leq k} \frac{q^{d_j}x_j-q^{d_i}x_i}{x_j-x_i} \prod_{i=1}^k t^{d_i(i-k)} (q/t)^{d_i(k-i)} p_i^{d_i}
\end{align*}
Doing some more elementary manipulations gives 
\begin{multline*}
 x^{\mu} e(\xx;\ss)^{-1}  N_{d}(\xx;q,t) e(\xx;\ss) x^{-\mu} \\
 =  \sum_{d_1+\ldots+d_k=d} \prod_{i,j=1}^{k} \frac{\left(t x_j/x_i\right)_{d_j}}{\left(q x_j/x_i\right)_{d_j}} \prod_{1\leq i < j \leq k} \frac{1-q^{d_j-d_i}x_j/x_i}{1-x_j/x_i}\left(\frac{t}{q}\right)^{d_j-d_i} \prod_{i=1}^k (q/t)^{d_i(k-1)} p_i^{d_i} 
\end{multline*}
It is now straightforward to see that conjugating this by $\Delta(\xx)$ gives
\begin{align*}
    \Delta(\xx)^{-1}  x^{\mu} e(\xx;\ss)^{-1} & N_{d}(\xx;q,t) e(\xx;\ss) x^{-\mu} \Delta(\xx) = D_{d}(\xx;q,t) (q/t)^{d(k-1)}
\end{align*}
Hence we see that (\ref{eq}) is equivalent to
\begin{align*}
      \Delta^{-1}(\xx) x^{\mu} e(\xx;\ss)^{-1}  N\left(z (t/q)^{k-1}\right) e(\xx;\ss)x^{-\mu} \Delta(\xx) P_{\mu}(\xx;q,q/t) =D(z) P_{\mu}(\xx;q,t)
\end{align*}
and so
\begin{align*}
    D(z) P_{\mu}(\xx;q,t)= P_{\mu}(\xx;q,t) \prod_{i=1}^{k}\frac{\varphi(t q^{\mu_i} (t/q)^{i-1} z)}{\varphi(q^{\mu_i} (t/q)^{i-1} z)}
\end{align*}

\end{proof}

\begin{Note}
This result can be interpreted as an evaluation formula for a $q$-integral of Selberg type. Using $q$-integral notation, the above equation can be written as
\begin{multline*}
\int_{[0,\aa]} P_{\mu}(\xx;q,t) h(\xx) \prod_{i,j=1}^{k} \frac{\varphi\left( tx_j/x_i \right)}{\varphi\left( q x_j/x_i \right)}  \frac{\varphi\left( qx_j/a_i \right)}{\varphi\left( t x_j/a_i \right)}  d_q \xx \\
=P_{\mu}(\aa;q,t) \prod_{i=1}^{k}\frac{\varphi(t q^{\mu_i} (t/q)^{i-1} z)}{\varphi(q^{\mu_i} (t/q)^{i-1} z)}
\end{multline*}
where 
$$
h(\xx):=\exp\left(\frac{1}{\ln(q)} \ln(z) \ln(x_1\ldots x_k) \right)
$$
and the $q$-integral is defined as
$$
\int_{[0,\aa]} g(\xx) d_q \xx := \sum_{d_1,\ldots, d_k \geq 0} g(a_1 q^{d_1}, \ldots, a_k q^{d_k})
$$
The above $q$-integral resembles the $q$-integrals considered by Kaneko \cite{Kaneko} and Warnaar \cite{OWselb}.

In the special case when $\mu$ is the empty partition, we have 
\begin{multline*}
    \int_{[0,\aa]} \exp\left(\frac{1}{\ln(q)} \ln(z) \ln(x_1\ldots x_k) \right) \prod_{i,j=1}^{k} \frac{\varphi\left( tx_j/x_i \right)}{\varphi\left( q x_j/x_i \right)}  \frac{\varphi\left( qx_j/a_i \right)}{\varphi\left( t x_j/a_i \right)}  d_q \xx \\
    = \prod_{i=1}^{k} \frac{\varphi(t  (t/q)^{i-1} z)}{\varphi( (t/q)^{i-1} z)}
\end{multline*}
From the integral formula of the vertex function (\ref{qint}), it is straightforward to see that this is equal to the vertex function of the quiver variety $T^*Gr(k,k)$.
\end{Note}

\subsection{Proof of Theorem \ref{insertion}}
Fix $k,n, \lambda$ as before. The full flag variety inside of $\mathbb{C}^k$ can be described as a Nakajima quiver variety, see \cite{tRSKor}. We label the K\"ahler parameters of the flag variety by $z_{n-k+1},\ldots, z_{n-1}$, and we identify these with a subset of the K\"ahler parameters of $X^{!}$. We label the equivariant parameters by $s_1,\ldots,s_k,\hbar^{!}$. Let $\textbf{F}(\ss,\zz)$ be the vertex function of the flag variety at the fixed point $W_1 \subset W_2 \subset \ldots \subset W_k=\mathbb{C}^k$ where $W_i$ is spanned by the first $i$ coordinate vectors.  
\begin{Lemma}\label{flag}
Substituting the K\"ahler parameters and $\hbar^{!}$ with the map $\kappa$ gives
$$
\kappa\left(\textbf{F}(\ss;\zz) \right) = p(\aa;\ss;q,q/\hbar)
$$
where $\aa=(a_{n-k+1},\ldots,a_n)$ and $\ss=(s_1,\ldots,s_k)$.
\end{Lemma}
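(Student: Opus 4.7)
The plan is a direct computation and comparison.  The cotangent bundle of the full flag variety $Fl(\matC^k)$ is realized as the Nakajima quiver variety attached to the $A_{k-1}$ quiver with dimension vector $(1,2,\ldots,k-1)$ and a single framing of dimension $k$ at the final vertex; the K\"ahler parameters $z_{n-k+1},\ldots,z_{n-1}$ are indexed by the $k-1$ vertices, and $s_1,\ldots,s_k,\hbar^{!}$ are the equivariant parameters.  At the chosen fixed point $W_1\subset W_2\subset\ldots\subset W_k$, the tautological bundle at the $i$-th vertex has $\bT$-character $s_1+\ldots+s_i$, and the class $\mathcal{P}$ at this point is determined by these weights together with $\hbar^{!}$.

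First I would substitute these characters in for the Chern roots in the integral formula (\ref{qint}), producing an explicit multiple $q$-series for $\textbf{F}(\ss;\zz)$ indexed by nonnegative integer tuples $d_{i,j}$ with $1\leq j\leq i\leq k-1$, whose summands are products of $q$-Pochhammer symbols in the ratios $s_a/s_b$ and in $\hbar^{!}$, weighted by $\zz^{\dd}$.  Next I would apply the substitution $\kappa$, which sends $\hbar^{!}\mapsto q/\hbar$ and the K\"ahler parameters $z_{n-k+1},\ldots,z_{n-1}$ to the prescribed monomials in $\hbar$ and the ratios $a_{i+1}/a_i$ from Section 4.2.  Finally, I would compare the resulting expression term by term with formula (1.11) of \cite{NSmac} for $p(\aa;\ss;q,q/\hbar)$ under the identifications $\xx=\aa$ and $t=q/\hbar$; the Noumi--Sano function is itself a multiple $q$-hypergeometric series of exactly the same combinatorial shape, so a summand-by-summand matching closes the argument.

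The main obstacle is bookkeeping rather than conceptual.  The exponents of $\hbar^{!}$ accumulating from the edge-$\mathrm{Hom}$, framing-$\mathrm{Hom}$, and adjoint contributions to $\mathcal{P}$ must rearrange precisely into the $\hbar$-shifts built into the Noumi--Sano formula, and the prefactor $\Phi((q-\hbar)\mathcal{P}_p)^{-1} f(\xx_p,\zz)^{-1}$ in (\ref{qint}) must produce the correct overall normalization, including cancellation of Vandermonde-type factors.  Once the indexing conventions are aligned, the identity reduces to an elementary equality of summands.
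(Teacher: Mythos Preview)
Your proposal is correct and matches the paper's primary approach: the paper's proof simply says the identity follows by comparing the explicit formula for $p(\aa;\ss;q,t)$ from \cite{NSmac} with the explicit formula for $\textbf{F}(\ss,\zz)$ given in \cite{dinksmir3}, which is precisely the summand-by-summand matching you outline (with the vertex-function formula cited rather than rederived from (\ref{qint})). The paper also records an alternative, more conceptual route you do not mention: both $\kappa(\textbf{F}(\ss;\zz))$ and $p(\aa;\ss;q,q/\hbar)$ solve the same bispectral system of $q$-difference equations (Theorem~2.6 of \cite{tRSKor} on one side, Theorem~1.1 of \cite{NSmac} on the other), and uniqueness of solutions to that bispectral problem forces equality without any explicit term-by-term comparison.
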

\begin{proof}
This can be seen directly by comparing the formula for $p(\aa;\ss;q,t)$ from \cite{NSmac} with the formula for $\textbf{F}(\ss,\zz)$ from \cite{dinksmir3}. Alternatively, this follows from Theorem 2.6 of \cite{tRSKor}, along with uniqueness of solutions to the bispectral problem (Theorem 1.1 of \cite{NSmac}).
\end{proof}
For the remainder of this subsection, we write $\mathsf{v}$ for the dimension vector of $X^{!}$, or equivalently, of $X_{\lambda}$. It was proven in \cite{dinksmir3} that:
\begin{Proposition}\label{vertexsum} Specializing the equivariant parameters to $s_i={\hbar^{!}}^{i-1} q^{d_{n-k,i}}$ in $\textbf{F}(\ss,\zz)$, we have
\begin{align*}
  \textbf{V}_{\lambda}(\zz)= \sum_{\substack{d_{i,j}}} \Psi \prod_{i=1}^{n-k} \prod_{j=1}^{\mathsf{v}_i} z_{i}^{d_{i,j}} \prod_{i=n-k+1}^{n-1} \prod_{j=1}^{\mathsf{v}_i} z_{i}^{d_{n-k,j}}  {\bf F}({\hbar^{!}}^{i-1} q^{d_{n-k,i}},\zz)
\end{align*}
where $\Psi \in \mathbb{C}(q,\hbar^{!})$ represents $\varphi$ function terms that do not depend on $d_{i,j}$ for $i>n-k$ and the summation is taken over $d_{i,j}$ for $1\leq i \leq n-k$, $1\leq j \leq \mathsf{v}_i$.
\end{Proposition}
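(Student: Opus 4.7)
The strategy is to work directly with the explicit $q$-hypergeometric formula (\ref{vpt}) for $\textbf{V}_{\lambda}(\zz)$ and exhibit a factorization in which the summation over the ``right-side'' degrees $d_{i,j}$ with $i > n-k$ assembles into the flag vertex function $\textbf{F}$ with the prescribed specialization of equivariant parameters, leaving behind only a sum over the ``left-side'' degrees $d_{i,j}$ with $1 \le i \le n-k$. The key structural observation is that the right half of the $A_{n-1}$ quiver for $X_{\lambda}$, namely the vertices $n-k, n-k+1, \ldots, n-1$ with dimensions $k, k-1, \ldots, 1$, is precisely the $A_{k-1}$ quiver describing the full flag variety in $\mathbb{C}^{k}$, with the middle vertex $n-k$ of $X_{\lambda}$ providing the rank-$k$ framing of the flag.

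First I would write $\textbf{F}(\ss,\zz)$ explicitly from the integral representation (\ref{qint}) at the fixed point $W_1 \subset \cdots \subset W_k = \mathbb{C}^k$, using $s_1, \ldots, s_k$ as the equivariant parameters. Then, in (\ref{vpt}), I would decompose the nested products over $i$ into the pieces $i < n-k$, $i = n-k$, and $i > n-k$, paying special attention to the bond linking vertex $n-k$ to vertex $n-k+1$. The factors carrying the right-side degrees $d_{i,j}$ should reassemble into $\textbf{F}$ after substituting $s_i \mapsto (\hbar^{!})^{i-1} q^{d_{n-k,i}}$: the weights $(\hbar^{!})^{i-1}$ are exactly the characters of the tautological bundles of the flag variety at the chosen fixed point, while the shifts $q^{d_{n-k,i}}$ encode the $q$-shifts of the Chern roots at the middle vertex within the ambient $q$-integral for $X_{\lambda}$.

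The monomial in the K\"ahler parameters must be rebalanced after this substitution: the right-side contribution $\prod_{i>n-k} z_i^{d_i}$ in (\ref{vpt}) is now absorbed into $\textbf{F}$, except for the reindexed piece $\prod_{i=n-k+1}^{n-1} \prod_{j=1}^{\mathsf{v}_i} z_i^{d_{n-k,j}}$ which arises because the ``base weight'' at each right-side node is set by the middle-vertex degrees. The remaining $\varphi$-symbol factors that depend only on $q$, $\hbar^{!}$, and the left degrees are then collected into the prefactor $\Psi$.

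The main obstacle is the combinatorial bookkeeping: (\ref{vpt}) contains nested products of $q$-Pochammer symbols indexed by vertex and component, and one must verify term by term that every factor involving a right-side degree appears inside $\textbf{F}((\hbar^{!})^{i-1} q^{d_{n-k,i}},\zz)$ with the correct shift, while every other factor is accounted for either in $\Psi$ or in the K\"ahler monomial. A useful sanity check is provided by Lemma \ref{flag}, which identifies $\kappa(\textbf{F})$ with the Noumi--Sano function $p(\aa;\ss;q,q/\hbar)$; one can alternatively perform the verification through the known explicit formula for $p$ instead of the raw quasimap expansion, which reduces the proof to an algebraic identity between two explicit $q$-hypergeometric series.
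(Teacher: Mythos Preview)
The paper does not supply its own proof of this proposition; it is imported from \cite{dinksmir3}. Your proposal is therefore not competing against an argument in the text, but it is precisely the direct combinatorial verification that underlies the cited result, and the strategy is correct: in the $q$-integral expansion (\ref{vpt}) one freezes the degrees $d_{n-k,j}$ at the middle vertex, notices that the remaining sum over $d_{i,j}$ with $i>n-k$ is exactly the $q$-integral for the full flag in $\mathbb{C}^k$ with framing weights $s_j=(\hbar^{!})^{j-1} q^{d_{n-k,j}}$, and collects what is left into $\Psi$ and the K\"ahler monomial. Your identification of the right tail $n-k,\ldots,n-1$ of the $A_{n-1}$ quiver with the flag quiver (vertex $n-k$ playing the role of the framing) and your explanation of the extra monomial $\prod_{i>n-k}\prod_j z_i^{d_{n-k,j}}$ as the base-point shift coming from the substitution $x_{i,j}\mapsto q^{d_{n-k,j}}(\hbar^!)^{j-1}$ are both on target.

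One small caution on the bookkeeping you flag as the ``main obstacle'': make sure you also track the vertex factors $\prod_{j,l=1}^{\mathsf{v}_{n-k}} (q\hbar^{l-j})_{d_{n-k,l}-d_{n-k,j}}/(\hbar^{l-j+1})_{d_{n-k,l}-d_{n-k,j}}$ at the \emph{middle} node $i=n-k$. These do not belong to the flag vertex function (for the flag, the framing node contributes no gauge-vertex factor), so they must be absorbed into $\Psi$; they depend only on the $d_{n-k,j}$, hence on the left-side data, so this is consistent with the statement. With that accounted for, the factorization goes through term by term as you describe.
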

From (\ref{vpt}), we see that the only terms that are nonzero arise from degree choices that satisfy
$$
d_{n-k,1}\leq \ldots \leq d_{n-k,k}
$$
So we define the partition $\mu$ by $\mu=(d_{n-k,k},\ldots, d_{n-k,1})$.

Then from (\ref{spec}) and Lemma \ref{flag}, we have
\begin{Lemma}[\cite{tRSKor}, Proposition 2.7]
$$
\aa^{\mu} \kappa\left(\textbf{F}\left(q^{\mu} {\hbar^{!}}^{\delta},\zz\right)\right) = P_{\mu}(\aa;q,q/\hbar)
$$
where $\aa=(a_{n-k+1},\ldots, a_{n})$, $\delta=(k-1,\ldots,0)$, and $\aa^{\mu}=a_{n-k+1}^{\mu_1} \ldots a_n^{\mu_{k}}$.
\end{Lemma}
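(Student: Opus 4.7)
The plan is to combine the two ingredients that have already been set up: Lemma \ref{flag}, which identifies the $\kappa$-image of the flag variety vertex function with the Noumi--Sano function $p$, and equation (\ref{spec}), the interpolation property characterizing $p$ in terms of Macdonald polynomials.

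First I would apply Lemma \ref{flag} with the equivariant variables $\ss = (s_1,\ldots,s_k)$ specialized to $s_i = q^{\mu_i} {\hbar^{!}}^{\delta_i}$. The key observation is that $\kappa$ only acts on the K\"ahler parameters $\zz$, on $\hbar^{!}$, and on $u$; the $s_i$ are independent formal variables. Hence the specialization commutes with $\kappa$, and the image of $\hbar^{!}$ under $\kappa$ is $q/\hbar$, so the specialization of $\ss$ on the right-hand side becomes $s_i \mapsto q^{\mu_i}(q/\hbar)^{\delta_i}$:
$$
\kappa\!\left(\textbf{F}\!\left(q^{\mu}{\hbar^{!}}^{\delta},\zz\right)\right) = p\!\left(\aa;\, q^{\mu}(q/\hbar)^{\delta};\, q,\, q/\hbar\right).
$$

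Second I would invoke the interpolation identity (\ref{spec}) with $t = q/\hbar$ and $\xx = \aa$, which reads
$$
\aa^{\mu}\, p\!\left(\aa;\, q^{\mu}(q/\hbar)^{\delta};\, q,\, q/\hbar\right) = P_{\mu}(\aa;\, q,\, q/\hbar).
$$
Multiplying the previous equality by $\aa^{\mu}$ and substituting yields precisely the statement of the Lemma.

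There is no real obstacle here: the content is entirely contained in the two cited results, and the only point requiring care is confirming that $\kappa$ commutes with the substitution of the spectral parameters $s_i$. This is automatic because $\kappa$ is a ring homomorphism acting on a disjoint set of variables, and the exponents $\mu_i, \delta_i$ are integers so the specialization is a well-defined Laurent-monomial substitution both before and after applying $\kappa$.
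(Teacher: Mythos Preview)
Your proof is correct and follows exactly the route the paper intends: the paper itself says this lemma follows ``from (\ref{spec}) and Lemma \ref{flag}'', and you have simply spelled out those two steps together with the observation that $\kappa(\hbar^{!})=q/\hbar$ turns the specialization $\ss=q^{\mu}{\hbar^{!}}^{\delta}$ into $\ss=q^{\mu}(q/\hbar)^{\delta}$. The extra sentence about $\kappa$ commuting with the substitution of the spectral parameters is a fair point to make explicit and is in line with how the paper uses $\kappa$ just before Lemma \ref{flag}.
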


Applying $\kappa$ to Proposition \ref{vertexsum} gives
\begin{Lemma}\label{sumform}
\begin{align*}
\kappa\left(\textbf{V}_{\lambda}(\zz) \right)& = \sum_{d_{i,j}} \kappa(\Psi) \prod_{i=1}^{n-k} \prod_{j=1}^{\mathsf{v}_i} \left(a_{i+1}/a_i\right)^{d_{i,j}}\prod_{i=1}^{k-1} \prod_{j=1}^{\mathsf{v}_i} \hbar^{-d_{i,j}} \prod_{i=n-k+1}^{n-1} \prod_{j=1}^{\mathsf{v}_i} \left(\hbar a_{i+1}/a_i\right)^{d_{n-k,j}} \\
& \aa^{-\mu} P_{\mu}(\aa;q,q/\hbar)
\end{align*}
\end{Lemma}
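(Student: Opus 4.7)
The plan is to directly apply the homomorphism $\kappa$ to both sides of Proposition~\ref{vertexsum} and regroup the factors. Since $\Psi \in \mathbb{C}(q,\hbar^{!})$ depends only on $q$, $\hbar^{!}$, and on the degrees $d_{i,j}$ with $i \leq n-k$, it simply transforms to $\kappa(\Psi)$ under $\kappa$ (possibly after reabsorbing a few monomial factors in $\hbar$ that depend on $d_{n-k,j}$, as described below). For the two K\"ahler-parameter products I would expand $\kappa(z_i^{d_{i,j}})$ case by case using the piecewise definition of $\kappa$: for $1 \leq i < k$ one obtains $\hbar^{-d_{i,j}}(a_{i+1}/a_i)^{d_{i,j}}$; for $k \leq i < n-k$ one obtains $(a_{i+1}/a_i)^{d_{i,j}}$; and for $n-k \leq i \leq n-1$ one obtains $\hbar^{d_{i,j}}(a_{i+1}/a_i)^{d_{i,j}}$. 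Applying this to the product $\prod_{i=1}^{n-k}\prod_{j=1}^{\mathsf{v}_i} z_i^{d_{i,j}}$ produces the first two displayed products of the Lemma, while applying it to $\prod_{i=n-k+1}^{n-1}\prod_{j=1}^{\mathsf{v}_i} z_i^{d_{n-k,j}}$ produces the third product.

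The substantive step is to identify the factor $\kappa\bigl(\textbf{F}(({\hbar^{!}})^{i-1} q^{d_{n-k,i}},\zz)\bigr)$ with a Macdonald polynomial. From the explicit form (\ref{vpt}) of $\textbf{V}_{\lambda}(\zz)$ one sees that the summand vanishes unless $d_{n-k,1} \leq d_{n-k,2} \leq \ldots \leq d_{n-k,k}$, so on the support of the sum I may set $\mu := (d_{n-k,k}, d_{n-k,k-1}, \ldots, d_{n-k,1})$, which is a partition of length at most $k$. Under the rule $\kappa(\hbar^{!}) = q/\hbar$, the specialization $s_i = ({\hbar^{!}})^{i-1} q^{d_{n-k,i}}$ becomes $(q/\hbar)^{i-1} q^{d_{n-k,i}}$; after the reindexing $i \mapsto k-i+1$ this matches the specialization $\ss = q^{\mu} (q/\hbar)^{\delta}$ with $\delta = (k-1,\ldots,0)$ that appears in the preceding Lemma. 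Invoking that Lemma then gives $\kappa\bigl(\textbf{F}(({\hbar^{!}})^{i-1} q^{d_{n-k,i}},\zz)\bigr) = \aa^{-\mu} P_{\mu}(\aa;q,q/\hbar)$, and substituting into the expression obtained in the first paragraph yields the claimed formula.

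The main obstacle is purely bookkeeping: the three cases in the definition of $\kappa$ must be tracked carefully through both products of Proposition~\ref{vertexsum} (in particular at the boundary index $i = n-k$, where a leftover power of $\hbar$ must be absorbed into $\kappa(\Psi)$), and the reordering that turns the tuple $(d_{n-k,i})_{i=1}^{k}$ into the partition $\mu$ must be aligned with the reordering of parameters built into the specialization $q^{\mu}(q/\hbar)^{\delta}$ in the preceding Lemma. Beyond this index-tracking, no new ideas are required; the Lemma is a direct corollary of Proposition~\ref{vertexsum} together with the flag-variety identification.
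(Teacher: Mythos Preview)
Your approach is correct and is exactly what the paper does: the paper's entire proof is the single clause ``Applying $\kappa$ to Proposition~\ref{vertexsum} gives,'' and your proposal simply unpacks that sentence by tracking $\kappa(z_i)$ through the three regimes $1\leq i<k$, $k\leq i<n-k$, $n-k\leq i\leq n-1$ and then invoking the preceding Lemma (from \cite{tRSKor}) to replace $\kappa\bigl(\textbf{F}(q^{\mu}{\hbar^{!}}^{\delta},\zz)\bigr)$ by $\aa^{-\mu}P_{\mu}(\aa;q,q/\hbar)$. Your remarks about the boundary index $i=n-k$ producing a stray $\hbar$-power to be absorbed into $\kappa(\Psi)$, and about the reindexing $i\mapsto k-i+1$ needed to match the specialization $q^{\mu}(q/\hbar)^{\delta}$, are the only bookkeeping subtleties and you have identified them correctly.
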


\begin{proof}[Proof of Theorem \ref{insertion}]
With our choice of notation, the operator $p_m$ in $D_{d}(\aa;q,\hbar)$ shifts $a_{n-k+m}$ by $q$. It is easy to see that 
$$
p_m \prod_{i=n-k+1}^{n-1} \prod_{j=1}^{\mathsf{v}_i} \left(\hbar a_{i+1}/a_i\right)^{d_{n-k,j}} =\prod_{i=n-k+1}^{n-1} \prod_{j=1}^{\mathsf{v}_i} \left(\hbar a_{i+1}/a_i\right)^{d_{n-k,j}} q^{\mu_m} p_m
$$
and 
$$
p_m \aa^{-\mu} =  \aa^{-\mu} q^{-\mu_m} p_m
$$
So the contributions of $q$ from these terms cancel. Using Lemma \ref{sumform}, this implies that 
$$
D_{d}(\aa;q,\hbar) \kappa\left( \textbf{V}_{\lambda}(\zz)\right) = \sum_{d_{i,j}} \left( \ldots \right) D_{d}(\aa;q,\hbar) P_{\mu}(\aa;q,q/\hbar)
$$
where $\left(\ldots \right)$ stands for the remaining terms in Lemma \ref{sumform}. By Theorem \ref{diagonal}, this implies 
\begin{align*}
D_{d}(\aa;q,\hbar) &\kappa\left( \textbf{V}_{\lambda}(\zz)\right) = \sum_{d_{i,j}} \left( \ldots \right) P_{\mu}(\aa;q,q/\hbar) \frac{(\hbar)_d}{(q)_d} P_{(d)}\left(q^{\mu_i}\left(\hbar/q\right)^{i-1};q,\hbar\right) \\
&= \sum_{d_{i,j}} \left( \ldots \right) P_{\mu}(\aa;q,q/\hbar) \frac{(\hbar)_d}{(q)_d} P_{(d)}\left(q^{\mu_i}\left(\hbar/q\right)^{i-k};q,\hbar\right) \left(q/\hbar\right)^{d(k-1)}
\end{align*}
where $\delta=(0,-1,\ldots,-k+1)$. The expression on the right is exactly equal to $\kappa$ applied to the insertion into $\textbf{V}_{\lambda}(\zz)$ of the descendant given in the statement of Theorem.
\end{proof}

\subsection{Proof of Theorem \ref{main}}

\begin{Proposition}\label{vgrcoeff}
\begin{align*}
\kappa\left(\textbf{V}_{\lambda}(\zz)\right)^{-1} D_{d}(\aa;q,\hbar) & \kappa\left( \textbf{V}_{\lambda}(\zz) \right) & \\
= \sum_{d_1+\ldots+d_k=d} & \prod_{i=1}^{n} \prod_{j=n-k+1}^n \frac{\left(\hbar a_j/a_i\right)_{d_j}}{\left(q a_j/a_i\right)_{d_j}} \prod_{i,j=n-k+1}^n \frac{\left(q a_j/a_i\right)_{d_j-d_i}}{\left(\hbar a_j/a_i\right)_{d_j-d_i}}
\end{align*}
\end{Proposition}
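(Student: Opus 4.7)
The proof is a direct calculation that combines the explicit product formula for $\kappa(\textbf{V}_{\lambda}(\zz))$ from Proposition \ref{prform} with the definition of the $q$-difference operator $D_d$. The key interpretation of the left-hand side is that it represents the conjugated operator $\kappa(\textbf{V}_{\lambda})^{-1} D_d(\aa;q,\hbar) \kappa(\textbf{V}_{\lambda})$ applied to the constant function $1$, which is equivalent to the function $D_d(\aa;q,\hbar)(\kappa(\textbf{V}_{\lambda}))/\kappa(\textbf{V}_{\lambda})$. Because $\kappa(\textbf{V}_{\lambda})$ is a simple product of $\varphi$-functions, this ratio is easy to evaluate factor by factor.

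Concretely, the plan is to expand $D_d = \sum_{d_1+\cdots+d_k=d} c(d_1,\ldots,d_k)\, p_1^{d_1}\cdots p_k^{d_k}$ where, by Definition \ref{dop} with $\xx = \aa = (a_{n-k+1},\ldots,a_n)$,
$$
c(d_1,\ldots,d_k) = \prod_{i,j=1}^{k} \frac{(\hbar a_{n-k+j}/a_{n-k+i})_{d_j}}{(q a_{n-k+j}/a_{n-k+i})_{d_j}} \, \frac{(q a_{n-k+j}/a_{n-k+i})_{d_j - d_i}}{(\hbar a_{n-k+j}/a_{n-k+i})_{d_j - d_i}},
$$
and $p_m$ sends $a_{n-k+m} \mapsto q a_{n-k+m}$. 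In $\kappa(\textbf{V}_{\lambda}) = \prod_{j=n-k+1}^{n}\prod_{i=1}^{n-k}\varphi(qa_j/a_i)/\varphi(\hbar a_j/a_i)$, each variable $a_{n-k+m}$ occurs only as an $a_j$ with $j>n-k$, and never as an $a_i$ with $i \leq n-k$; hence the shifts act on the individual $\varphi$-factors independently. The telescoping identity $\varphi(x)/\varphi(q^{N}x) = (x)_N$ then immediately yields
$$
\frac{p_1^{d_1}\cdots p_k^{d_k}\, \kappa(\textbf{V}_{\lambda})}{\kappa(\textbf{V}_{\lambda})} = \prod_{m=1}^{k}\prod_{i=1}^{n-k} \frac{(\hbar a_{n-k+m}/a_i)_{d_m}}{(q a_{n-k+m}/a_i)_{d_m}}.
$$

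After relabeling the summation indices via $j = n-k+m$, so that $d_m$ with $m \in \{1,\ldots,k\}$ becomes $d_j$ with $j \in \{n-k+1,\ldots,n\}$, the intrinsic coefficient $c(d_1,\ldots,d_k)$ produces the factor $\prod_{i,j=n-k+1}^{n}$ appearing in the statement, while the shift ratio above fills in the $i\in\{1,\ldots,n-k\}$ portion of the outer product $\prod_{i=1}^n \prod_{j=n-k+1}^n (\hbar a_j/a_i)_{d_j}/(q a_j/a_i)_{d_j}$. The $i\in\{n-k+1,\ldots,n\}$ portion of that outer product is precisely the first Pochhammer factor already inside $c$, so combining the two reproduces the full range $i \in \{1,\ldots,n\}$ on the right-hand side. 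The second factor $\prod_{i,j=n-k+1}^n (qa_j/a_i)_{d_j-d_i}/(\hbar a_j/a_i)_{d_j-d_i}$ transfers directly from $c$.

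The only real obstacle is notational bookkeeping: carefully tracking that indices $(i,j) \in \{1,\ldots,k\}^2$ in Definition \ref{dop} correspond to $(n-k+i, n-k+j) \in \{n-k+1,\ldots,n\}^2$ under the substitution $\xx = \aa$, and that the diagonal ``Pochhammer factor'' from $c$ exactly fills the gap in the shift ratio where $i > n-k$. Once this relabeling is made explicit, the identity is essentially automatic, and no deeper ingredient beyond Proposition \ref{prform} is needed.
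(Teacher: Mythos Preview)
Your proof is correct and follows exactly the same approach as the paper: both use Proposition \ref{prform} to compute the effect of the shift operators $p_m^{d_m}$ on $\kappa(\textbf{V}_{\lambda}(\zz))$ via the identity $\varphi(x)/\varphi(q^N x)=(x)_N$, and then combine this with Definition \ref{dop}. Your version simply spells out the index relabeling and the merging of the $i\le n-k$ and $i>n-k$ ranges that the paper leaves implicit.
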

\begin{proof}
From Proposition \ref{prform}, we see that for $n-k+1\leq j \leq n$
$$
p_j^{d_j} \kappa\left(\textbf{V}_{\lambda}(\zz)\right) = \prod_{i=1}^{n-k} \frac{\left(\hbar a_j/a_i\right)_{d_j}}{\left(q a_j/a_i \right)_{d_j}}\kappa\left(\textbf{V}_{\lambda}(\zz)\right)
$$
which, along with Definition \ref{dop}, gives the result.
\end{proof}

We note that the right hand side in Proposition \ref{vgrcoeff} is precisely the coefficient of $\textbf{V}_{p}(z,\aa)$ corresponding to the choice $d_1,\ldots, d_k$.

\begin{proof}[Proof of Theorem \ref{main}]

Applying Proposition \ref{reducetopoint} gives
\begin{align*}
\kappa\left( \widetilde{\textbf{V}}^{!}_{p^{!}}(u,\zz) \right) &= \kappa\left( \textbf{V}_{\lambda}(\zz) \right)^{-1} \kappa\left( \textbf{V}^{!}_{p^{!}}(u,\zz)\right) \\
&= \kappa\left( \prod_{i=1}^k \frac{\varphi(u  {\hbar^{!}}^{i})}{\varphi(u q {\hbar^{!}}^{i-1})} \right) \kappa\left(\textbf{V}_{\lambda}(\zz)\right)^{-1}  \\ &\qquad \kappa\left(\textbf{V}_{\lambda}(\zz) \left\langle \sum_{d=0}^{\infty} \frac{\left(q/\hbar^{!}\right)_d}{(q)_d} P_{(d)}\left(\xx; q, q/\hbar^{!}\right) (\hbar^{!} u)^d  \right\rangle  \right) 
\end{align*}

By Theorem \ref{insertion}, this is equal to

\begin{multline*}
\kappa\left( \prod_{i=1}^k \frac{\varphi(u  {\hbar^{!}}^{i})}{\varphi(u q {\hbar^{!}}^{i-1})} \right) \kappa\left(\textbf{V}_{\lambda}(\zz)\right)^{-1} \sum_{d=0}^{\infty} D_{d}(\aa;q,\hbar) \kappa\left(\textbf{V}_{\lambda}(\zz) \left({\hbar^{!}}^k u \right)^d \right) \\
= \kappa\left( \prod_{i=1}^k \frac{\varphi(u  {\hbar^{!}}^{i})}{\varphi(u q {\hbar^{!}}^{i-1})} \right) \kappa\left(\textbf{V}_{\lambda}(\zz)\right)^{-1} \sum_{d=0}^{\infty} D_{d}(\aa;q,\hbar) \kappa\left(\textbf{V}_{\lambda}(\zz) \right) z^d
\end{multline*}
By Proposition \ref{vgrcoeff} this is just $\textbf{V}_p(\aa,z)$. By definition of $\kappa$,
$$
\kappa\left( \prod_{i=1}^k \frac{\varphi(u  {\hbar^{!}}^{i})}{\varphi(u q {\hbar^{!}}^{i-1})}  \right) = \prod_{i=1}^{k} \frac{\varphi\left( (\hbar/q)^{i-1} z\right)}{\varphi\left(\hbar (\hbar/q)^{i-1} z
\right)}
$$
So after multiplying this factor over, we obtain
$$
\widetilde{\textbf{V}}^{!}_{p^{!}} (u,\zz) = \widetilde{\textbf{V}}_p(\aa,z)
$$

\end{proof}

\printbibliography
\end{document}